\DeclareMathOperator{\arcsinh}{arcsinh}
\newtheorem{theorem}{Theorem}
\newtheorem{example}{Example}
\newtheorem{remark}{Remark}
\newtheorem{corollary}{Corollary}
\newtheorem{definition}{Definition}
\title{Bour's Theorem of Spacelike Surfaces in Minkowski 4--Space}
\author[M. Babaarslan]{Murat Babaarslan}
\address{Yozgat Bozok University, Department of Mathematics, 66100, Yozgat, Turkey}
\email{murat.babaarslan@bozok.edu.tr}
\author[B. Bekta\c{s} Dem\.{i}rc\.i]{Burcu Bekta\c s Dem\.{i}rc\.i}
\address{Fatih Sultan Mehmet Vak{\i}f University, Hal\.{I}\c{c} Campus, Faculty of Engineering,
Department of Civil Engineering, 34445, Beyo\u{g}lu, İstanbul, Turkey(Corresponding author)}
\email{bbektas@fsm.edu.tr}
\author[Y. K{\"u}\c{c}{\"u}kar{\i}kan]{Yas\.in K{\"u}\c{c}{\"u}kar{\i}kan}
\address{Yozgat Bozok University, School of Graduates Studies, Department of Mathematics, 66100, Yozgat, Turkey}
\email{kucukarikanyasin@gmail.com}
\subjclass[2010]{53B25, 53C50.}
\keywords{Bour's theorem, rotational surface, helicoidal surface, Gauss map, minimal surface, Minkowski 4--space.}
\begin{document}
\maketitle

\begin{abstract}
In this paper, we study on three kinds of spacelike helicoidal surfaces in Minkowski $4$--space. First, we give an isometry between such helicoidal surfaces and rotational surfaces which is a kind of generalization of Bour theorem in Minkowski $3$--space to Minkowski $4$--space. Then, we investigate geometric properties for such isometric surfaces having same Gauss map. By using these results, we give the parametrizations of isometric pair of surfaces. As a particular case, we examine the right helicoidal surfaces in view of Bour's theorem. Also, we present some examples by choosing the components of the profile curves and the parameters of the surfaces via Mathematica. 
\end{abstract}

\section{Introduction}
In daily life, examples of surfaces can be seen everywhere. 
For instance, balloons, innertubes, cans and soap films provide physical models of surfaces (see \cite{Oprea}). 
Rotational surfaces are one of the most well--known surfaces
and the use of rotational surfaces is fundamental in physics and engineering. 
Since they are found in many places in nature, they are also used in geometric modeling. Cans, glasses and furniture legs are some examples of rotational surfaces (see \cite{Karacan}).

A natural generalization of the rotational surfaces are helicoidal surfaces. 
Also, we often come across examples of helicoidal surfaces in nature, science and engineering such as creeper plants, helicoidal stairs, helicoidal conveyors, parking garage ramps, helicoidal paths, helical channels, helicoidal towers and skyscrapers (see \cite{Babaarslan1}).

Many mathematicians have obtained significant results regarding helicoidal surfaces in different ambient spaces 
(see \cite{Carmo, Baikoussis, Beneki, Ji1, Choi1, Ji2, Choi2, Ji3, Kim, Lopez, Babaarslan1, Babaarslan2, Babaarslan3}). 
One of the most notable result is a generalization of the relationship between a right helicoid and a catenoid.
In this context, E. Bour gave the following statement about an isometry between a helicoidal surface 
and a rotational surface in $\mathbb{E}^3$ which is called Bour's theorem. \\
\textbf{Bour's theorem.}
A generalized helicoid is isometric to a rotational surface so that helices on the helicoid correspond to parallel circles on the rotational surface \cite{Bour}. 

On the other hand, the right helicoid and catenoid are both members of a one-parameter family of isometric minimal surfaces and they have the same Gauss map. In \cite{Ikawa1}, 
T. Ikawa discussed these properties for isometric surfaces according to Bour's theorem in $\mathbb{E}^3$. 
Also, E. G{\"u}ler and Y. Yayl{\i} studied Bour's theorem 
for generalized helicoidal surface in $\mathbb{E}^3$ and they investigated the minimality condition and having same Gauss map for isometric surfaces in \cite{Guler1}.
Moreover, E. G{\"u}ler et. al. gave the relation between the Laplace--Beltrami operator and the curvatures of helicoidal surfaces in Euclidean $3$--space and they also discussed Bour's theorem on the Gauss map of the helicoidal surfaces in \cite{Guler2}.

After these works, it seems natural question 
whether Bour's theorem 
have a counterpart in Minkowski space. 
For this purpose, T. Ikawa \cite{Ikawa2} studied Bour's theorem of spacelike and timelike helicoidal surfaces in Minkowski 3--space with non--null and null axis and he determined the parametrizations the pair of isometric surfaces which have same Gauss map in $\mathbb{E}^3_1$. 
Then, E. G{\"u}ler and A. T. Vanl{\i} examined Bour's theorem on generalized helicoid with null axis in Minkowski 3-space in \cite{Guler3}.

In 2017, D. T. Hieu and N. N. Thang \cite{Hieu} studied Bour's theorem for helicoidal surfaces of Euclidean $4-$space and they proved that
if the Gauss maps of isometric surfaces are same, then they are hyperplanar and minimal.

The notion of Minkowski 4--space or Minkowski space--time is strongly related to Einstein’s special theory of relativity and and it was proposed as a model for spacetime in the special theory of relativity by Minkowski in 1907, (see \cite{Ratcliffe}). The Minkowski 4--space has more complicated and richer geometric structures than the Euclidean 4--space
due to the difference between their metrics, angles and the motions of particles (see \cite{Babaarslan3}).

In Minkowski $4$--space, there are three types rotation with $2$--dimensional axis, called elliptic, hyperbolic and parabolic rotations which leave the spacelike, timelike and degenerate planes invariant. By using these rotations and the definition of helicoidal surface, M. Babaarslan and N. S{\"o}nmez gave the parametrizations of three types of helicoidal surfaces in Minkowski $4$--space, \cite{Babaarslan4}.

In this paper, we examine Bour's theorem on three types of spacelike helicoidal surfaces in Minkowski 4--space, that is, Bour's theorem in Minkowski 3--space will be generalized to Minkowski 4--space. 
After that, we determine the parametrizations of the pair of isometric spacelike surfaces having same Gauss map and we give their geometric properties such as hyperplanar and minimal. Also, we illustrate some examples by choosing the components of the profile curves and the parameters of the surfaces.

\section{Preliminaries}
Let a metric $\Tilde{g}$ be defined by
\begin{equation}
\label{eq1}
\Tilde{g}(x,y)=x_{1}y_{1}+x_{2}y_{2}+x_{3}y_{3}-x_{4}y_{4}
\end{equation}
for the vectors  $x=(x_{1},x_{2},x_{3},x_{4})$ and $y=(y_{1},y_{2},y_{3},y_{4})$ in Euclidean $4-$space $\mathbb{E}^4$. With the metric $\Tilde{g}$, the space $\mathbb{E}^4$ is called as Minkowski $4-$space and it is denoted by $\mathbb{E}^4_{1}$.
A vector $v$ in $\mathbb{E}^4_1$ is called spacelike if 
$\langle v,v\rangle>0$ or $v=0$, timelike if $\langle v,v\rangle<0$
and lightlike if $\langle v,v\rangle=0$ and $v\neq 0$. 

Assume that ${X}:D\subset\mathbb{R}^2\rightarrow\mathbb{E}^4_{1}$ 
is a smooth parametric surface in $\mathbb{E}^4_{1}$ with a coordinate system $\{u,v\}$, where $D$ is an open subset of $\mathbb{R}^2$. 
Then, the tangent plane of the surface $X$ at the point $p={X(u,v)}$ is generated by $X_{u}$ and $X_{v}$.

The coefficients of first fundamental form of $X$ is given by
\begin{equation}
\label{eq2}
g_{11}=\langle X_{u}, X_{u}\rangle,
\text{ }g_{12}=g_{21}=\langle X_{u}, X_{v}\rangle, 
\text{ }g_{22}=\langle X_{v}, X_{v}\rangle. 
\end{equation}
Thus, the induced metric $g$ on $X$ is
\begin{equation}
g=g_{11} d{u}^{2}+ 2g_{12}d{u}d{v}+g_{22}dv^2.
\end{equation}
If ${W}=\mbox{det}(g)=g_{11}g_{22}-
g_{12}^2\not=0 $, then the surface $X$ is non--degenerate.
Otherwise, it is degenerate one. 
Throughout the article, we are not interested in degenerate case.
If ${W}>0$, then the surface $X$ in $\mathbb{E}^4_1$ is spacelike and if ${W}<0$, then it is a timelike surface.  

Let $\{e_{1},{e}_{2},{N}_{1},{N}_{2}\}$ be 
a local orthonormal frame field on $X$ in $\mathbb{E}^4_1$
such that 
$e_{1},{e}_{2}$ are tangent to $X$ and $N_{1},N_{2}$ are normal to $X$. 
Then, the coefficients of the second fundamental form of $X$ according to $N_{i}$, $(i=1,2)$ are given by 
 \begin{equation}
 \label{eq3}
 b_{11}^{i}=\langle X_{uu},  N_{i}\rangle,
 \;\;b_{12}^{i}=b_{21}^{i}=\langle X_{uv}, N_{i}\rangle, 
 \;\;b_{22}^{i}=\langle X_{vv}, N_{i}\rangle.  
 \end{equation}
Thus, the mean curvature vector $H$ of $X$ in $\mathbb{E}^4_1$ is given by
\begin{equation}
\label{eq4}
H={\epsilon}_{1}H_{1}N_{1}+\mathbf{\epsilon}_{2}H_{2}N_{2},
\end{equation}
where the components $H_i$ of $H$ is
$\displaystyle{H_{i}=
\frac{b_{11}^{i}g_{22}-2b_{12}^{i}g_{12}+b_{22}^{i}g_{11}}{2{W}}}$ for $i=1,2$ and the Gauss curvature $K$ of $X$ in $\mathbb{E}^4_1$ is
\begin{equation}
\label{eq6}
K=\frac{\epsilon_{1}(b_{11}^1b_{22}^1-(b_{12}^1)^2)+
{\epsilon_{2}(b_{11}^2b_{22}^2-(b_{12}^2})^2)}{{W}}
\end{equation}
for ${\epsilon}_{1}=\langle N_{1},N_{1}\rangle$, 
${\epsilon}_{2}=\langle N_{2},N_{2}\rangle$.
If the mean curvature vector $H$ of $X$ is zero, 
then $X$ is said to be a minimal (maximal) surface in $\mathbb{E}^4_1$ and if the Gauss curvature of $X$ is zero, 
then it is called as developable (flat) surface in $\mathbb{E}^{4}_{1}$. 
Also, $X$ is said to be a marginally trapped surface if the mean curvature vector $H$ is lightlike.
 
Let $G(2,4)$ denote the Grassmanian manifold consisting of 
all oriented $2-$planes through the origin in $\mathbb{E}^4_1$
and $\bigwedge^2 \mathbb{E}^{4}_{1}$ the vector space obtained 
by the exterior product of $2-$vectors in $\mathbb{E}^4_1$.
Let $f_{i_1}\wedge f_{i_2}$ and $g_{i_1}\wedge g_{i_2}$ be two
vectors in $\bigwedge^2 \mathbb{E}^{4}_{1}$, where 
$\{f_1,f_2,f_3,f_4\}$ and $\{g_1,g_2,g_3,g_4\}$ are two orthonormal
bases of $\mathbb{E}^4_1$. Define an indefinite inner product
$\langle \cdot,\cdot \rangle$ on $\bigwedge^2 \mathbb{E}^{4}_{1}$ by 
\begin{equation}
 \langle f_{i_1}\wedge f_{i_2}, g_{i_1}\wedge g_{i_2}\rangle 
 =\mbox{det}(\langle f_{i_l}, g_{j_k}\rangle ).
\end{equation}
Therefore, we identify $\bigwedge^2 \mathbb{E}^{4}_{1}$ with 
some pseudo--Euclidean space $\mathbb{E}^6_t$ for some positive integer $t$. Then, $G(2,4)$ is canonically embedded in 
$\bigwedge^2 \mathbb{E}^{4}_{1}\cong\mathbb{E}^6_t$. 
For a surface $X$ in $\mathbb{E}^4_1$, 
the Gauss map $\nu:X\rightarrow G(2,4)\subset\mathbb{E}_{t}^{6}$ 
is a smooth map 
which carries a point $p\in X$ into the oriented $2$--plane through the origin of $\mathbb{E}^4_1$ obtained from the parallel translation of the tangent space of $X$ at $p$ in $\mathbb{E}^4_1$.
Since $\{e_1, e_2\}$ is an orthonormal tangent frame on $X$, then 
the Gauss map $\nu$ is given by
\begin{equation}
 \nu(p)=(e_1\wedge e_2)(p).   
\end{equation} 
We refer \cite{ChenP} to see the details. 
Moreover, the Gauss map is also defined by using the normal space of $X$ in $\mathbb{E}^4_1$, \cite{Chen1}. 
From now on, we assume that $X$ is a spacelike surface in $\mathbb{E}^4_1$, that is, $W>0$. Then, we can choose an orthonormal tangent frame field $e_{1},{e}_{2}$ on $X$ as follows
 \begin{equation}
 \label{eq7}
     e_{1}=\frac{1}{\sqrt{g_{11}}}X_{u},
     \;\;\; e_{2}=\frac{1}{\sqrt{Wg_{11}}}
     (g_{11}X_{v}-g_{12}X_{u}).
 \end{equation}
According to the chosen frame field, the Gauss map $\nu$ of 
$X$ can be written as 
 \begin{equation}
 \label{eq8}
     \nu=\frac{1}{\sqrt{W}}X_{u} \wedge X_{v}.
 \end{equation}
The definition of helicoidal surfaces in $\mathbb{E}^4_{1}$ can be given as follows.
\begin{definition}
\cite{Babaarslan4}
Let $\beta:I\subset\mathbb{R}\longrightarrow \Pi\subset\mathbb{E}^4_1$ be a smooth curve 
in a hyperplane $\Pi\subset\mathbb{E}^4_1$, 
${P}$ be a $2-$plane in $\Pi$ and $\ell$ be a line parallel to ${P}$. A helicoidal surface in $\mathbb{E}^4_1$ is defined as a rotation of the curve $\beta$ around ${P}$ with a translation along the line $\ell$. Here, the speed of translation is proportional to the speed of this rotation. 
\end{definition} 
Also, the three types of helicoidal surfaces in $\mathbb{E}^4_{1}$ were defined in \cite{Babaarslan4} given as follows.

Let $\{\eta_{1},\eta_{2},\eta_{3},\eta_{4}\}$ be a standard orthonormal basis for $\mathbb{E}^4_1$, where $\eta_{1}=(1,0,0,0)$,
$\eta_{2}=(0,1,0,0)$, $\eta_{3}=(0,0,1,0)$ and $\eta_{4}=(0,0,0,1)$.
Then, we choose a timelike $2-$plane ${P}_1$ generated by 
$\{\eta_{3},\eta_{4}\}$, ${\Pi}_1$ a hyperplane generated by 
$\{\eta_{1},\eta_{3},\eta_{4}\}$ and a line $l_{1}$ generated by $\eta_{4}$.
Assume that 
$\beta_1:I\longrightarrow\Pi_1\subset\mathbb{E}^4_1,
\;\beta_{1}(u)=\left(x(u),0,z(u),w(u)\right)$
is a smooth regular curve lying in $\Pi_1$ defined on an open interval $I\subset\mathbb{R}$ with $x(u)\neq 0$.
For $0\leq v<2\pi$ and $\lambda\in \mathbb{R^{+}}$, 
we consider the surface $X_1$
\begin{equation}
\label{eq9}
   X_{1}(u,v) =(x(u)\cos v,x(u)\sin v,z(u),w(u)+\lambda v),
\end{equation}
which is the parametrization of the helicoidal surface obtained the rotation of the curve $\beta_1$ 
that leaves the timelike plane ${P}_1$ pointwise fixed  
followed by the translation along $l_1$. 
The surface $X_1$ in $\mathbb{E}^4_1$
is called a helicoidal surface of type I.
If $w$ is a constant function, then $X_{1}$ is called as right helicoidal surface of type I. Also, if $z$ is a constant function, then $X_{1}$ is just a helicoidal surface in $\mathbb{E}^3_{1}$.

Let $\{\eta_{1},\eta_{2},\eta_{3},\eta_{4}\}$ be a standard orthonormal basis for $\mathbb{E}^4_1$.
We choose a spacelike $2-$plane $P_{2}$ generated by $\{\eta_{1},\eta_{2}\}$, a hyperplane $\Pi_{2}$ generated by $\{\eta_{1},\eta_{2},\eta_{4}\}$ and a line $l_{2}$ generated by $\{\eta_{1}\}$. 
Assume that 
$\beta_{2}:I\longrightarrow\Pi_{2}\subset\mathbb{E}^4_1,
\;\beta_{2}(u)=\left(x(u),y(u),0, w(u)\right)$
is a smooth regular curve lying in $\Pi_{2}$ 
defined on an open interval $I\subset\mathbb{R}$ 
$w(u)\neq 0.$ 
For $v\in\mathbb{R}$ and $\lambda\in \mathbb{R^{+}}$, 
we consider the surface $X_{2}$ 
\begin{equation}
\label{eq10}
X_{2}(u,v)=(x(u)+\lambda v,y(u),w(u)\sinh v,w(u)\cosh v)
\end{equation}
which is the parametrization of the helicoidal surface obtained the rotation of the curve $\beta_{2}$ which leaves 
the spacelike plane ${P}_{2}$ pointwise fixed followed by the translation along $l_{2}$. 
The surface $X_{2}$ in $\mathbb{E}^4_1$
is called a helicoidal surface of type II.
If $x$ is a constant function, then $X_{2}$ is called as right helicoidal surface of type II. Also, if $y$ is a constant function, then $X_{2}$ is just a helicoidal surface in $\mathbb{E}^3_{1}$.

Let us consider the pseudo--orthonormal basis $\{\eta_{1},\eta_{2},\mathbf{\xi}_{3},\mathbf{\xi}_{4}\}$ 
of $\mathbb{E}^4_{1}$ such that $\mathbf{\xi}_{3}=\frac{1}{\sqrt{2}}(\eta_{4}-\eta_{3})$ and $\mathbf{\xi}_{4}=\frac{1}{\sqrt{2}}(\eta_{3}+\eta_{4})$. 
Then, we choose a degenerate $2-$plane $P_{3}$ generated by $\{\eta_{1},\mathbf{\xi}_{3}\}$, a hyperplane $\Pi_{3}$ 
generated by $\{\eta_{1},\mathbf{\xi}_{3},\mathbf{\xi}_{4}\}$ and a line $l_{3}$ generated by $\{\mathbf{\xi}_{3}\}$. 
Then, the orthogonal transformation $T_{3}$ of $\mathbb{E}^4_{1}$ which leaves the degenerate plane $P_{3}$ invariant is given by $T_{3}(\eta_{1})=\eta_{1},\ T_{3}(\eta_{2})=\eta_{2}+\sqrt{2}v\mathbf{\xi}_{3},\ T_{3}(\mathbf{\xi}_{3})=\mathbf{\xi}_{3}$ and $T_{3}(\mathbf{\xi}_{4})=\sqrt{2}v\eta_{2}+v^{2}\mathbf{\xi}_{3}+\mathbf{\xi}_{4}$. 
If the profile curve $\beta_{3}$ is given by $\beta_{3}(u)=x(u)\eta_{1}+z(u)\mathbf{\xi}_{3}+w(u)\mathbf{\xi}_{4}$ in $\Pi_{3},$ where $u\in I\subset \mathbb{R}$ and $w(u)\neq 0$.
Then, we consider the surface $X_{3}$ 
\begin{equation}
\label{eq11}
X_{3}(u,v)=x(u)\eta_{1}+\sqrt{2}vw(u)\eta_{2}+(z(u)+v^{2}w(u)
+\lambda v)\mathbf{\xi}_{3}+w(u)\mathbf{\xi}_{4}
\end{equation}
which is the parametrization of 
the helicoidal surface obtained a rotation 
of the curve $\beta_{3}$ which leaves the degenerate plane 
${P}_{3}$ pointwise fixed
followed by the translation along $l_{3}$. 
The surface $X_{3}$ in $\mathbb{E}^4_1$
is called the helicoidal surface of type III. 
If $w$ is a constant function, then $M_{3}$ is called as right helicoidal surface of type III.
\begin{remark}
For $\lambda=0$, the three kinds of helicoidal surfaces in $\mathbb{E}^4_1$ given by \eqref{eq9}--\eqref{eq11} 
become the rotational surfaces of hyperbolic, elliptic and parabolic type in $\mathbb{E}^4_1$, respectively. 
For the details, see \cite{Dursun} and \cite{Bektas}.
Throughout this article, the corresponding rotational surfaces 
are denoted by $R_1, R_2$ and $R_3$. 
\end{remark}

\section{Bour's Theorem of Spacelike Surfaces}
In this section, we give Bour's theorem for three types of spacelike helicoidal surfaces in $\mathbb{E}^4_1$ and 
we study such isometric surfaces having same Gauss map. 
\subsection{Helicoidal Surface of Type I}
Assume that $X_1$ is a spacelike helicoidal surface of type I in $\mathbb{E}^4_1$ given by \eqref{eq9}. 
From a simple calculation, we have 
\begin{equation}
\label{type1eq2}
g_{11}= x'^2(u)+z'^2(u)-w'^2(u),\;\;g_{12}=g_{21}=-\lambda w'(u),\;\;
g_{22}=x^2(u)-\lambda^2 
\end{equation}
with ${W}=(x^2(u)-\lambda^2)(x'^2(u)+z'^2(u))-x^2(u)w'^2(u)>0$
for all $u\in I\subset\mathbb{R}$.
Then, we choose an orthonormal frame field 
$\{e_{1},e_{2}, N_{1}, N_{2}\}$ on $X_1$ in $\mathbb{E}^4_1$ 
such that $e_1, e_2$ are tangent to $X_1$ and $N_1, N_2$ are normal to $X_1$:
\begin{align}
\label{type1eq3}
\begin{split}
e_{1}&=\frac{1}{\sqrt{g_{11}}}X_{1u},\;\;\;\;
e_{2}=\frac{1}{\sqrt{Wg_{11}}}(g_{11}X_{1v}-{g_{12}X_{1u}}),\\
N_{1}&=\frac{1}{\sqrt{x'^2+z'^2}}(z'\cos v, z'\sin v, -x',0),\\
N_{2}=&\frac{1}{\sqrt{W(x'^2+z'^2)}}
(xx'w'\cos v-\lambda(x'^2+z'^2)\sin v,
xx'w'\sin v+\lambda(x'^2+z'^2)\cos v,\\
&xz'w', x(x'^2+z'^2))
\end{split}
\end{align}
with 
$\langle e_{1},e_{1}\rangle=\langle e_{2},e_{2}\rangle=\langle N_{1},N_{1}\rangle=-\langle N_{2},N_{2}\rangle=1$. 
By a direct computations, we get the coefficients of the second fundamental form as follows
\begin{align}
\label{type1eq4}
    \begin{split}
     &b_{11}^1=\frac{x'' z'- x' z''}{\sqrt{{x'^2}+z'^2}},\;\;\; 
     b_{12}^1=b_{21}^1=0,\;\;\;
     b_{22}^1=\frac{-x z'}{\sqrt{x'^2+z'^2}},\\
     &b_{11}^2=\frac{x(w'(x'x''+z'z'')-w''(x'^2+z'^2))}
    {\sqrt{W(x'^2+z'^2)}},\;\; 
     b_{12}^2=b_{21}^2=\frac{\lambda x'\sqrt{x'^2+z'^2}}{\sqrt{W}},\\
    &b_{22}^2=\frac{- x^2x'w'}{\sqrt{W(x'^2+z'^2)}}.
    \end{split}
\end{align}
Thus, we find the components $H_{i}^{X_{1}},\;(i=1,2)$ 
of the mean curvature
and from the equation \eqref{eq6}, we also compute the Gauss curvature $K^{X_{1}}$ of $X_1$ in $\mathbb{E}^4_1$, respectively, as follows
\begin{align}
\label{type1eq5} 
\begin{split}
&H_{1}^{X_{1}}=\frac{-xz'(x'^2+z'^2-w'^2)+(x^2-
\lambda^2)(x''z'-x'z'')}{2W\sqrt{x'^2+z'^2}},\\
&H_{2}^{X_{1}}=\frac{x'w'(2\lambda^2-x^2)(x'^2+z'^2)+x^2x'w'^3-x(x^2-\lambda^2)(x'(x'w''-x''w')+z'(z'w''-w'z''))}
{2{W}^{3/2}\sqrt{x'^2+z'^2}}
\end{split}
\end{align}
and 
\begin{equation}
\label{type1eq6}
K^{X_{1}}=\frac{xz'(x^2-\lambda^2)(x'z''-x''z')+x^3w'(x''w'-x'w'')+ \lambda^2 x'^2}{W^2}.
\end{equation}

The following theorem is a generalization of the classical Bour's theorem  for a spacelike helicoidal surfaces of type I in  $\mathbb{E}^4_{1}$.
\begin{theorem}
\label{type1thm1}
A spacelike helicoidal surface of type I in $\mathbb{E}^4_{1}$ given by \eqref{eq9} is isometric to a spacelike rotational surface \begin{equation}
\label{type1izR}
  R_{1}(u,v)=\begin{bmatrix}
\sqrt{x^2(u)-\lambda^2}\cos{\left(v-\int{\frac{\lambda w'(u)}{x^2(u)-\lambda^2}du}\right)}\\ \sqrt{x^2(u)-\lambda^2}\sin{\left(v-\int{\frac{\lambda w'(u)}{x^2(u)-\lambda^2}du}\right)}\\
\int{\frac{a(u)x(u)x'(u)}{\sqrt{x^2(u)-\lambda^2}}du}\\
\int{\frac{b(u)x(u)x'(u)}{\sqrt{x^2(u)-\lambda^2}}du}
\end{bmatrix}
\end{equation}
so that helices on the spacelike helicoidal surface of type I correspond to parallel circles on the spacelike rotational surface, where $a(u)$ and $b(u)$ are differentiable functions satisfying the following equation:
 \begin{equation}
 \label{type1eq7}
    a^2(u)-b^2(u)=\frac{x^2(u)(z'^2(u)-w'^2(u))-\lambda^2(x'^2(u)+z'^2(u))}{x^2(u)x'^2(u)}
 \end{equation}
with $x'(u)\not=0$ for all $u\in I\subset\mathbb{R}$.
\end{theorem}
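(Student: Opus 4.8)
The plan is to diagonalize the induced metric of $X_1$ by untwisting the rotational parameter, to recognize the resulting metric as that of a surface of revolution, and to read off the constraint on $a$ and $b$ from the one remaining scalar identity. Concretely, I would start from \eqref{type1eq2} and complete the square in $dv$, writing
\[
g=g_{22}\Big(dv+\tfrac{g_{12}}{g_{22}}\,du\Big)^{2}+\Big(g_{11}-\tfrac{g_{12}^{2}}{g_{22}}\Big)du^{2},
\]
where $g_{11}-g_{12}^{2}/g_{22}=W/g_{22}>0$. Since $g_{12}/g_{22}=-\lambda w'/(x^{2}-\lambda^{2})$, the substitution $\bar v=v-\int\frac{\lambda w'(u)}{x^{2}(u)-\lambda^{2}}\,du$ satisfies $d\bar v=dv+\frac{g_{12}}{g_{22}}\,du$, so $g=\frac{W}{x^{2}-\lambda^{2}}\,du^{2}+(x^{2}-\lambda^{2})\,d\bar v^{2}$. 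A surface of revolution $(f(u)\cos\theta,f(u)\sin\theta,P(u),Q(u))$ has induced metric $(f'^{2}+P'^{2}-Q'^{2})\,du^{2}+f^{2}\,d\theta^{2}$, so the two metrics agree (with $\theta=\bar v$) precisely when $f=\sqrt{x^{2}-\lambda^{2}}$ and $f'^{2}+P'^{2}-Q'^{2}=W/(x^{2}-\lambda^{2})$. Taking $P'=a\,xx'/\sqrt{x^{2}-\lambda^{2}}$ and $Q'=b\,xx'/\sqrt{x^{2}-\lambda^{2}}$ converts this last equation, after multiplying by $x^{2}-\lambda^{2}$ and cancelling the terms $x^{2}x'^{2}+\lambda^{2}w'^{2}$ common to both sides, into exactly \eqref{type1eq7}; this is the step in which one divides by $x^{2}x'^{2}$ and hence needs $x'(u)\neq0$.

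The cleanest way to present this is to compute the first fundamental form of $R_1$ in \eqref{type1izR} directly in its own parameters $(u,v)$. Writing $r=\sqrt{x^{2}-\lambda^{2}}$ and $\phi=\int\frac{\lambda w'}{x^{2}-\lambda^{2}}\,du$, differentiating $R_1=(r\cos(v-\phi),r\sin(v-\phi),P,Q)$ and using $\cos^{2}+\sin^{2}=1$ gives $\langle R_{1v},R_{1v}\rangle=r^{2}$, $\langle R_{1u},R_{1v}\rangle=-r^{2}\phi'$ and $\langle R_{1u},R_{1u}\rangle=r'^{2}+r^{2}\phi'^{2}+P'^{2}-Q'^{2}$. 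Substituting $r'=xx'/\sqrt{x^{2}-\lambda^{2}}$, $\phi'=\lambda w'/(x^{2}-\lambda^{2})$ and the expressions for $P',Q'$ above, the first two quantities reproduce $g_{22}=x^{2}-\lambda^{2}$ and $g_{12}=-\lambda w'$ of \eqref{type1eq2} automatically, while $\langle R_{1u},R_{1u}\rangle=g_{11}$ is equivalent to \eqref{type1eq7}. Hence the parameter--preserving map $X_1(u,v)\mapsto R_1(u,v)$ is an isometry, and since its first fundamental form coincides with $g$ one has $\det=W>0$, so $R_1$ is indeed a spacelike rotational surface.

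It remains to match helices with parallel circles and to point out the obstacle. For fixed $u=u_0$ the curve $v\mapsto X_1(u_0,v)$ is a helix: a circle of radius $x(u_0)$ in the $\eta_1\eta_2$--directions together with the linear climb $w(u_0)+\lambda v$ along $\eta_4$. Its image $v\mapsto R_1(u_0,v)$ is the circle of radius $\sqrt{x^{2}(u_0)-\lambda^{2}}$ lying in the plane on which the third and fourth coordinates equal the constants $P(u_0),Q(u_0)$, i.e. a circle parallel to the rotation plane of $R_1$; the fixed phase $\phi(u_0)$ is irrelevant for a full circle. The only genuine labour in the whole argument is the bookkeeping in the computation of $\langle R_{1u},R_{1u}\rangle$ — there is no conceptual difficulty, but the chain rule through $\phi(u)$ and the minus sign contributed by the timelike fourth coordinate both have to be handled with care.
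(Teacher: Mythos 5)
Your proposal is correct and takes essentially the same route as the paper: your completed-square substitution $\bar v=v-\int\frac{\lambda w'(u)}{x^{2}(u)-\lambda^{2}}\,du$ is exactly the paper's orthogonal-trajectory change of variable, after which both arguments compare the diagonalized metric with that of a rotational surface $(n\cos t,n\sin t,s,r)$, set $n=\sqrt{x^{2}-\lambda^{2}}$, and read off \eqref{type1eq7}, finishing with the helix-to-parallel-circle correspondence at $u=u_{0}$. Your direct computation of the first fundamental form of $R_{1}$ in the original $(u,v)$ parameters (checking the cross term $-\lambda w'$ explicitly) is a slightly more explicit verification than the paper's passage through the $(u,\bar v)$ coordinates, but it is not a different method.
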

 
\begin{proof}
Let $X_1$ be a spacelike helicoidal surface in $\mathbb{E}^4_1$ 
defined by \eqref{eq9}. From the equation \eqref{type1eq2}, 
we have the induced metric of $X_1$
as follows:
\begin{equation}
\label{type1mtrc1}
 ds^2 _{X_1}=(x'^2(u)+z'^2(u)-w'^2(u))du^2-2\lambda w'(u)dudv + (x^2(u)-\lambda^2)dv^2.
\end{equation}
Then, we find a curve on $X_1$ which is orthogonal to a helix on $X_1$, that is, this curve is $v$--parameter curve of $X_1$. 
From orthogonality condition, we get 
\begin{equation}
\label{type1eq8}
    -\lambda w'(u)du+(x^2(u)-\lambda^2)dv=0.
\end{equation}
By solving the equation \eqref{type1eq8}, we find 
\begin{equation}
    v=\int{\frac{\lambda w'(u)}{x^2(u)-\lambda^2}du}+c,
\end{equation}
where c is an arbitrary constant.
If we take $\bar{v}=c$, then we have 
\begin{equation}
\label{type1eq8-a}
\overline{v}=v-\int{\frac{\lambda w'(u)}{x^2(u)-\lambda^2}du}.
\end{equation}
From equation \eqref{type1eq8-a}, we can easily obtain the following equation:
\begin{equation}
\label{type1eq9}
    dv=d\overline{v}+\frac{\lambda w'(u)}{x^2(u)-\lambda^2}du.
\end{equation}
Substituting the equation \eqref{type1eq9} in \eqref{type1mtrc1}, 
the induced metric of $X_1$ becomes
\begin{equation}
\label{type1eq10}
 ds^2 _{X_1}=\left(x'^2(u)+z'^2(u)-w'^2(u)-\frac{\lambda^2 w'^2(u)}{x^2(u)-\lambda^2}\right)du^2 + (x^2(u)-\lambda^2)d\overline{v}^2.
\end{equation}
On the other hand, 
the spacelike rotational surface $R_1$ in $\mathbb{E}^4_{1}$
related to $X_1$ is given by 
\begin{equation}
\label{type1rtk}
    R_{1}(k,t)=(n(k)\cos{t},n(k)\sin{t},s(k),r(k)).
\end{equation}
We know that the induced metric of $R_1$ is given by
\begin{equation}
\label{type1eq11}
 ds^2 _{R_1}=(n'^2(k)+s'^2(k)-r'^2(k))dk^2 + n^2(k)dt^2
\end{equation}
with $n'^2(k)+s'^2(k)-r'^2(k)>0$. 
Comparing the equations \eqref{type1eq10} and \eqref{type1eq11}, 
we get an isometry between $X_1$ and $R_1$ by taking
$u=k, \bar{v}=t$, $n(k)=\sqrt{x^2(u)-\lambda^2}$ 
and 
\begin{equation}
\label{type1eq12}
 n'^2(k)+s'^2(k)-r'^2(k)= x'^2(u)+z'^2(u)-w'^2(u)-\frac{\lambda^2 w'^2(u)}{x^2(u)-\lambda^2}. 
\end{equation}
Also, we note that due to the fact that $X_1$ is a spacelike surface in $\mathbb{E}^4_1$, 
$x^2(u)-\lambda^2>0$ for $u\in I\subset\mathbb{R}$. 
Say $a(u)=\frac{s'(u)}{n'(u)}$ and 
$b(u)=\frac{r'(u)}{n'(u)}$. Then, we find 
\begin{equation}
  s(u)=\int{\frac{a(u)x(u)x'(u)}
  {\sqrt{x^2(u)-\lambda^2}}du}\;\;\mbox{and}\;\; r(u)=\int{\frac{b(u)x(u)x'(u)}
  {\sqrt{x^2(u)-\lambda^2}}du}.
\end{equation}
Thus, we get the spacelike rotational surface $R_1$ given by \eqref{type1izR} which is isometric to $X_1$. 
If we choose a helix as $X_1(u_0,v)$, 
where $u_{0}$ is an arbitrary constant, 
it corresponds to the parallel circle $R_{1}(u_0,v)=(\sqrt{x^2(u_0)-\lambda^2}\cos{v},\sqrt{x^2(u_0)-\lambda^2}\sin{v},0,0)$ lying on the $x_1x_2$--plane with the radius $\sqrt{x^2(u_0)-\lambda^2}$. 
\end{proof} 

For later use, we find the components of the mean curvature vector
of the spacelike rotational surface $R_1$ given by \eqref{type1izR} as follows:
\begin{align}
\label{type1izm1}
\begin{split}
H_1^{R_1}&=\frac{(\lambda^2-x^2)a'+axx'(b^2-a^2-1)}
{2xx'(1+a^2-b^2)\sqrt{(1+a^2)(x^2-\lambda^2)}},\\
H_2^{R_1}&=
\frac{(aa'b(x^2-\lambda^2)-(x^2-\lambda^2)b'+bxx'(b^2-1)
+a^2((\lambda^2-x^2)b'-bxx')}{2xx'(1+a^2-b^2)^{3/2}
\sqrt{(1+a^2)(x^2-\lambda^2)}}.
\end{split}
\end{align}
Then, we consider isometric surfaces according to Bour's theorem whose Gauss maps are same. 
\begin{theorem}
\label{type1thm2}
Let $X_{1}$ be a spacelike helicoidal surface of type I given by \eqref{eq9} and $R_{1}$ be a spacelike rotational surface  of type I given by \eqref{type1izR} in $\mathbb{E}^4_{1}$. If they have the same Gauss map, then they are hyperplanar and minimal.
\end{theorem}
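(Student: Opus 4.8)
The plan is to compute the Gauss map of both surfaces in the common coordinate system $(u,\bar v)$ furnished by Theorem \ref{type1thm1}, set the two Gauss maps equal as maps into $\bigwedge^2\mathbb{E}^4_1$, and extract the resulting differential equations on $x,z,w$ and on $a,b$. Since the isometry identifies $e_1$ up to sign on both sides (the $u$--parameter curves match and both are unit speed), and $\{e_1,e_2\}$ is orthonormal, the Gauss maps $\nu=e_1\wedge e_2$ agree if and only if $e_2$ agrees up to the same sign; equivalently the normal planes agree. So the real content is: equality of normal spaces forces the profile curve of $X_1$ to be ``flat'' in the $z$ or $w$ direction. The first concrete step is to write $\nu^{X_1}=\tfrac{1}{\sqrt W}X_{1u}\wedge X_{1v}$ from \eqref{eq8} and re--express it in the $(u,\bar v)$ frame using \eqref{type1eq9}; simultaneously write $\nu^{R_1}=\tfrac{1}{\sqrt{W_{R_1}}}R_{1u}\wedge R_{1v}$ for the parametrization \eqref{type1izR}. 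Because the induced metrics already coincide, $\sqrt W=\sqrt{W_{R_1}}$ in these coordinates, so one is comparing $X_{1u}\wedge X_{1\bar v}$ with $R_{1u}\wedge R_{1\bar v}$ componentwise in $\bigwedge^2\mathbb{E}^4_1$ (six components).

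**Key steps.** First I would record the six bivector components of each side; the $\cos(\bar v+\cdots),\sin(\bar v+\cdots)$ dependence must match termwise, which immediately kills the ``mixed'' freedom and leaves a handful of scalar ODEs in $u$. I expect these to say, in effect, that $z'$ (the third coordinate direction) and a combination of $w'$ are forced to vanish or be constant — i.e. that $X_1$ lies in a hyperplane. Concretely, the $\eta_1\wedge\eta_2$ component of $X_{1u}\wedge X_{1v}$ is $x x'$, matching $n n'=xx'$ automatically; the components involving $\eta_3$ and $\eta_4$ of $X_1$ versus $s',r'$ of $R_1$ give the constraints. Combining them with the isometry relation \eqref{type1eq12} and the defining relation \eqref{type1eq7} for $a,b$, I would solve for $z',w'$ and $a,b$. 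Having shown the surface is hyperplanar (say $z$ or $w$ constant, reducing $X_1$ to a helicoidal surface in an $\mathbb{E}^3$ or $\mathbb{E}^3_1$ slice and $R_1$ to a rotational surface there), I would then invoke the classical Bour/Ikawa result in dimension three — or simply plug the constraints into the already--computed $H_i^{X_1}$ in \eqref{type1eq5} and $H_i^{R_1}$ in \eqref{type1izm1} — to read off that both mean curvature vectors vanish. So the proof reduces to: (i) equate Gauss maps $\Rightarrow$ ODE system; (ii) solve $\Rightarrow$ hyperplanar; (iii) substitute $\Rightarrow$ minimal.

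**Main obstacle.** The hard part will be step (ii): the bookkeeping of the six bivector components, separating the genuinely independent equations from those that are automatically satisfied by the isometry, and then disentangling the coupled algebraic--differential relations among $x,z,w,a,b$ (with the awkward $\sqrt{x^2-\lambda^2}$ denominators and the sign subtlety $\langle N_2,N_2\rangle=-1$). In particular one must be careful that ``same Gauss map'' is interpreted as equality of the oriented tangent $2$--planes after parallel translation, so an overall sign on $\nu$ is allowed; handling that sign ambiguity correctly is where a naive computation could go wrong. Once the constraints force $z$ (or $w$) to be constant, steps (i) and (iii) are essentially substitutions into formulas already displayed in the paper, so I do not anticipate further difficulty there.
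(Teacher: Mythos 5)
Your steps (i)--(ii) are exactly the paper's argument: write $\nu_{X_1}$ and $\nu_{R_1}$ via \eqref{eq8} in the bivector basis $\eta_{ij}$, equate the six components, use the $\eta_{34}$--component ($\lambda z'=0$, so $z'=0$ since $\lambda\neq0$) and then the $\eta_{13},\eta_{23}$ pair (giving $a=0$) to conclude both surfaces are hyperplanar. The gap is in your step (iii). Hyperplanarity alone is \emph{not} enough to ``read off'' minimality by substitution: with $z'=0$ and $a=0$ one gets $H_1^{X_1}=H_1^{R_1}=0$ from \eqref{type1eq5} and \eqref{type1izm1}, but $H_2^{X_1}$ and $H_2^{R_1}$ (see \eqref{type1eq18}) are generically nonzero for an isometric hyperplanar pair; minimality is an extra consequence of the two component equations you have not yet used. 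The paper closes this by taking the $\eta_{14},\eta_{24}$ equations \eqref{type1eq14}--\eqref{type1eq15}, extracting $x w'=b x x'\cos\bigl(\int\frac{\lambda w'}{x^2-\lambda^2}du\bigr)$ and $\lambda x'=-b x x'\sin\bigl(\int\frac{\lambda w'}{x^2-\lambda^2}du\bigr)$, forming the quotient \eqref{type1eq23}, and \emph{differentiating} that identity in $u$; the resulting second--order ODE \eqref{type1eq24} is precisely the numerator of $H_2^{X_1}$ (and, via \eqref{type1eq18a}--\eqref{type1eq18b}, proportional to $H_2^{R_1}$), whence both vanish. Your proposal lists these components among ``the constraints'' but never identifies that this differentiation step is what produces the minimality ODE; relatedly, your expectation that the constraints force $w'$ ``to vanish or be constant'' is wrong --- $w$ satisfies the nontrivial ODE whose solution \eqref{eqcor1} appears in Theorem \ref{type1thm2a}.

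Your alternative route for (iii) --- once hyperplanarity is established, view $X_1$ and $R_1$ as a Bour pair in an $\mathbb{E}^3_1$ slice (spacelike helicoidal surface with timelike axis) and invoke Ikawa's Minkowski 3--space theorem that same Gauss map implies minimality --- is a genuinely different and viable finish, provided you verify that equality of the bivector Gauss maps restricted to the common hyperplane yields equality of the classical (unit--normal) Gauss maps there, and that Ikawa's case hypotheses (causal character of surface and axis) match. That buys brevity at the cost of outsourcing exactly the computation the paper performs explicitly; the paper's direct route has the advantage of producing the relation \eqref{type1eq23} that is then reused in Theorem \ref{type1thm2a} to parametrize the pair. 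One further minor point: the paper takes ``same Gauss map'' to mean strict equality of $\nu$, so the sign ambiguity you worry about does not enter its argument.
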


\begin{proof}
Assume that $X_{1}$ be a spacelike helicoidal surface of type I in $\mathbb{E}^4_{1}$ given by \eqref{eq9} and $R_{1}$ be a spacelike rotational surface  of type I in $\mathbb{E}^4_{1}$  given by \eqref{type1izR}. Now, we define the Gauss maps 
of $X_{1}$ and $R_{1}$ as follows.\\
Let $\{\eta_{1},\eta_{2},\eta_{3},\eta_{4}\}$ be the standard 
orthonormal bases of $\mathbb{E}_1^4$.
Denote $\eta_{ij}=\eta_{i} \wedge \eta_{j}$ for ${i,j=1,2,3,4}$ and
$i<j$.
Then, using equation \eqref{eq8}, 
we get the Gauss maps of
$X_1$ and $R_1$, respectively, as follows
\begin{align}
\label{type1GH}
\nu_{X_{1}} =&\frac{1}{\sqrt{W}}(xx'\eta_{12}+xz'\sin{v}\eta_{13}+(\lambda x'\cos{v}+xw'\sin{v})\eta_{14}-xz'\cos{v}\eta_{23}\notag\\
&+(\lambda x'\sin{v}-xw'\cos{v})\eta_{24}+ \lambda z'\eta_{34})
\end{align}
and
\begin{align}
\label{type1GR}
\nu_{R_{1}}=&\frac{1}{\sqrt{W}}
\Bigg(xx'\eta_{12}+axx'\sin\left(v-\int\frac{\lambda w'(u)}{x^2(u)-\lambda^2}du\right)\eta_{13}
\notag\\&+bxx'\sin\left(v-\int\frac{\lambda w'(u)}{x^2(u)-\lambda^2}du\right)\eta_{14}-axx'\cos\left(v-\int\frac{\lambda w'(u)}{x^2(u)-\lambda^2}du\right)\eta_{23}\notag\\&-bxx'\cos\left(v-\int\frac{\lambda w'(u)}{x^2(u)-\lambda^2}du\right)\eta_{24}\Bigg).\end{align}
If $X_1$ and $R_1$ have the same Gauss map, 
then comparing \eqref{type1GH} and \eqref{type1GR}, we get the following system of equations
\begin{equation}
  \label{type1eq12}
    x(u)z'(u)\sin{v}=a(u)
    x(u)x'(u)\sin{\left(v-\int\frac{\lambda w'(u)}{x^2(u)-\lambda^2}du\right)},
\end{equation}
\begin{equation}
  \label{type1eq13}
    x(u)z'(u)\cos{v}=a(u)
    x(u)x'(u)\cos{\left(v-\int\frac{\lambda w'(u)}{x^2(u)-\lambda^2}du\right)},
\end{equation}
\begin{equation}
  \label{type1eq14}
    \lambda x'(u)\cos{v}+x(u)w'(u)\sin{v}=b(u)x(u)x'(u)\sin{\left(v-\int
    \frac{\lambda w'(u)}{x^2(u)-\lambda^2}du\right)},
\end{equation}
\begin{equation}
  \label{type1eq15}
    \lambda x'(u)\sin{v}-x(u)w'(u)\cos{v}=-b(u)x(u)x'(u)\cos{\left(v-\int
    \frac{\lambda w'(u)}{x^2(u)-\lambda^2}du\right)},
\end{equation}
\begin{equation}
  \label{type1eq16}
    \lambda z'(u)=0.
\end{equation}
Since $\lambda \neq 0 $, $z'(u)=0$ from equation \eqref{type1eq16}.
Then, the equations \eqref{type1eq12} and \eqref{type1eq13}
give $a(u)=0$. 
Therefore, it can be easily seen that 
the surfaces $X_{1}$ and $R_{1}$ are hyperplanar,
that is, they are lying in $\mathbb{E}^3_1$.
In this case, the equations \eqref{type1eq5} 
and \eqref{type1izm1} give the components of the mean curvatures 
of $X_1$ and $R_1$ as follows:
\begin{equation}
\label{type1eq17}
H_{1}^{X_{1}}=H_{1}^{R_{1}}=0
\end{equation}
and
\begin{align}
\label{type1eq18}
\begin{split}
H_{2}^{X_{1}}&=\frac{x'^2w'(2\lambda^2-x^2)
+x^2w'^3+x(x^2-\lambda^2)(x''w'-x'w'')}{2{W}^{3/2}},\\
H_{2}^{R_{1}}&=
\frac{-(x^2-\lambda^2)b'+bxx'(b^2-1)}{2(1-b^2)^{3/2}
xx'\sqrt{x^2-\lambda^2}}.
\end{split}
\end{align}
On the other hand, the equation \eqref{type1eq7} gives 
\begin{equation}
 \label{type1eq18a}   
 b^2(u)=\frac{x^2(u)w'^2(u)+\lambda^2 x'^2(u)}{x^2(u)x'^2(u)}. 
\end{equation}
Using the equation \eqref{type1eq18a} in \eqref{type1eq18}, we get
\begin{equation}
 \label{type1eq18b}
H_{2}^{R_{1}}=\frac{x^2w'(x'^2w'(2\lambda^2-x^2)+x^2w'^3
+x(x^2-\lambda^2)(x''w'-x'w''))}{2W\sqrt{(x^2w'^2+
\lambda^2x'^2)(x^2-\lambda^2)}}.    
\end{equation}
Thus, it can be seen that $H_2^{R_1}=x^2w'H_2^{X_1}$. 
Moreover, using equations \eqref{type1eq14} and \eqref{type1eq15}, we obtain the following equations
\begin{equation}
\label{type1eq21}
    x(u)w'(u)=b(u)x(u)x'(u)\cos{\left(\int\frac{\lambda w'(u)}{x^2(u)-\lambda^2}du\right)},
\end{equation}
\begin{equation}
\label{type1eq22}
    \lambda x'(u)=-b(u)x(u)x'(u)\sin{\left(\int\frac{\lambda w'(u)}{x^2(u)-\lambda^2}du\right)}.
\end{equation}
Considering the equations \eqref{type1eq21} and \eqref{type1eq22} together, we have
\begin{equation}
\label{type1eq23}
    \frac{x(u)w'(u)}
    {\lambda x'(u)}=-\cot{\left(\int\frac{\lambda w'(u)}{x^2(u)-\lambda^2}du\right)}.
\end{equation}
If we take the derivative of the equation \eqref{type1eq23} with respect to $u$, the equation \eqref{type1eq23} becomes
\begin{equation}
\label{type1eq24}
    \lambda^2(xx'w''+w'(2x'^2-
    xx''))+x^2(w'(w'^2 -x'^2)+x(x''w'-x'w''))=0
\end{equation}
which implies $H_{2}^{X_{1}}=H_{2}^{R_{1}}=0$. 
Thus, we get the desired results.
\end{proof}

\begin{theorem}
\label{type1thm2a}
Let $X_{1}$ be a spacelike helicoidal surface of type I given by \eqref{eq9} and $R_{1}$ be a spacelike rotational surface of type I given by \eqref{type1izR} in $\mathbb{E}^4_{1}$. If they have the same Gauss map, then their parametrizations are given by 
\begin{equation}
\label{type1izaGH}
   X_{1}(u,v) =(x(u)\cos v,x(u)\sin v,c_1,w(u)+\lambda v)
\end{equation}
and
\begin{equation}
\label{izodönel1}
  R_{1}(u,v)=\begin{bmatrix}
\sqrt{x^2(u)-\lambda^2}\cos{\left(v-\int{\frac{\lambda w'(u)}{x^2(u)-\lambda^2}du}\right)}\\ \sqrt{x^2(u)-\lambda^2}\sin{\left(v-\int{\frac{\lambda w'(u)}{x^2(u)-\lambda^2}du}\right)}\\
c_2\\
\pm\frac{1}{\sqrt{c_3}}\arcsinh{\sqrt{c_3(x^2(u)-\lambda^2)}}+c_4
\end{bmatrix},
\end{equation}
where $c_1, c_2$ and $c_4$ are arbitrary constants, $0<c_3<\frac{1}{\lambda^2}$ and  
\begin{equation}
\label{eqcor1}
w(u)=\pm\Bigg(\sqrt{\frac{1-c_3\lambda^2}{c_3}}
\arcsinh\left({\sqrt{c_3(x^2(u)-\lambda^2)}}\right)-{\lambda}\arctan{\left(
\sqrt{\frac{(1-c_3\lambda^2)(x^2(u)-\lambda^2)}{\lambda^2(1+c_3(x^2(u)-\lambda^2))}}\right)}\Bigg).
\end{equation}
\end{theorem}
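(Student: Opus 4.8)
The plan is to bootstrap from Theorem~\ref{type1thm2}. Its proof already shows that if $X_1$ and $R_1$ have the same Gauss map then $z'(u)=0$, so $z(u)=c_1$ for a constant $c_1$, which is precisely the parametrization \eqref{type1izaGH} of $X_1$; the same proof yields $a(u)=0$, so the third component $s(u)=\int a(u)x(u)x'(u)/\sqrt{x^2(u)-\lambda^2}\,du$ of $R_1$ in \eqref{type1izR} is a constant $c_2$. Hence only the profile function $w(u)$ and the fourth component $r(u)=\int b(u)x(u)x'(u)/\sqrt{x^2(u)-\lambda^2}\,du$ remain to be pinned down, and for that I would use the two leftover scalar identities \eqref{type1eq21}--\eqref{type1eq23} extracted in the proof of Theorem~\ref{type1thm2}, together with the isometry constraint \eqref{type1eq7} (equivalently \eqref{type1eq18a}).

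First I would integrate \eqref{type1eq23}. Setting $\theta(u)=\int \lambda w'(u)/(x^2(u)-\lambda^2)\,du$, equation \eqref{type1eq23} reads $\cot\theta=-xw'/(\lambda x')$, while differentiating the definition of $\theta$ gives $\theta'=\lambda w'/(x^2-\lambda^2)$; eliminating $w'$ turns these into the separable equation $\tan\theta\,d\theta=-\lambda^2 x'\,du/\bigl(x(x^2-\lambda^2)\bigr)$. The partial-fraction identity $\lambda^2/\bigl(x(x^2-\lambda^2)\bigr)=-1/x+x/(x^2-\lambda^2)$ makes both sides integrable in closed form, and I would conclude $\cos^2\theta=K^2(x^2-\lambda^2)/x^2$ for a constant $K>0$. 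Excluding the degenerate possibilities $K=0$ (which forces $w$ constant, i.e.\ a right helicoid) and $K=1$, the constant $c_3:=(1-K^2)/\lambda^2$ then lies in the range $0<c_3<1/\lambda^2$ asserted in the statement.

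Next I would feed this back in. From $\cot\theta=-xw'/(\lambda x')$ one gets $w'^2=\lambda^2 x'^2\cot^2\theta/x^2$, and using $\cos^2\theta=K^2(x^2-\lambda^2)/x^2$ with $K^2=1-c_3\lambda^2$ simplifies $x^2-K^2(x^2-\lambda^2)$ to $\lambda^2\bigl(1+c_3(x^2-\lambda^2)\bigr)$, so that $w'^2=(1-c_3\lambda^2)(x^2-\lambda^2)x'^2/\bigl(x^2(1+c_3(x^2-\lambda^2))\bigr)$; substituting the variable $x^2-\lambda^2$ and splitting the integrand into an $\arcsinh$-piece and an $\arctan$-piece integrates this to exactly \eqref{eqcor1}, the two sign choices being recorded by the outer $\pm$. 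For the last component of $R_1$, the relation \eqref{type1eq7} with $z'=0$ and $a=0$ gives $b^2=(x^2w'^2+\lambda^2x'^2)/(x^2x'^2)$, hence $r'^2=b^2x^2x'^2/(x^2-\lambda^2)=(x^2w'^2+\lambda^2x'^2)/(x^2-\lambda^2)$; inserting the expression just found for $w'^2$ collapses this to $r'^2=x^2x'^2/\bigl((x^2-\lambda^2)(1+c_3(x^2-\lambda^2))\bigr)$, whose antiderivative is $\pm\frac{1}{\sqrt{c_3}}\arcsinh\sqrt{c_3(x^2-\lambda^2)}+c_4$, completing \eqref{izodönel1}. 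I expect the only genuine difficulty to be bookkeeping rather than ideas: carrying the single constant $K$ (equivalently $c_3$) consistently through the three successive integrations, tracking the $\pm$ ambiguities, and verifying that the non-degeneracy hypotheses translate into the stated range $0<c_3<1/\lambda^2$; each individual integration is elementary once the partial-fraction and substitution steps are in place.
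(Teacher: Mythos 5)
Your proposal is correct, and it reaches the same formulas as the paper but by a noticeably different route. The paper, after invoking Theorem \ref{type1thm2} to get $z=c_1$, $a=0$ (exactly as you do), uses the \emph{minimality} of $R_1$: setting $H_2^{R_1}=0$ in \eqref{type1izm1} yields the Bernoulli equation \eqref{coreq2} for $b(u)$, whose solution $b^2=1/\bigl(1+c_3(x^2-\lambda^2)\bigr)$ is then compared with the isometry constraint \eqref{coreq1} to extract $w'$ and hence \eqref{eqcor1}, and finally the last component of $R_1$ is integrated. You instead bypass minimality altogether and integrate the Gauss-map compatibility relation \eqref{type1eq23} as a separable ODE for $\theta(u)=\int\lambda w'/(x^2-\lambda^2)\,du$, obtaining $\cos^2\theta=K^2(x^2-\lambda^2)/x^2$ and identifying $c_3=(1-K^2)/\lambda^2$; this determines $w'$ first, and the isometry constraint then gives $r'^2=x^2x'^2/\bigl((x^2-\lambda^2)(1+c_3(x^2-\lambda^2))\bigr)$, which integrates to the stated $\arcsinh$ term. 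I checked your computations ($\tan\theta\,d\theta=-\lambda^2x'\,du/(x(x^2-\lambda^2))$, the partial fractions, the simplification $x^2-K^2(x^2-\lambda^2)=\lambda^2(1+c_3(x^2-\lambda^2))$, and the collapse of $x^2w'^2+\lambda^2x'^2$ to $x^2x'^2/(1+c_3(x^2-\lambda^2))$) and they are sound; your approach is arguably more direct, since in the paper the minimality used for the Bernoulli step is itself derived from \eqref{type1eq23}. One small point to make explicit: excluding $K>1$ (i.e.\ $c_3\le 0$) is not mere non-degeneracy bookkeeping but follows from spacelikeness, since with $z'=0$ the condition $W>0$ reads $x^2w'^2<(x^2-\lambda^2)x'^2$, which with your expression for $w'^2$ forces $c_3>0$, while $c_3<1/\lambda^2$ comes from $w$ being non-constant; the paper glosses over the same point by simply declaring $c_3$ positive when solving the Bernoulli equation.
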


\begin{proof}
Assume that $X_{1}$ is a spacelike helicoidal surfaces of type I given by \eqref{eq9} and $R_{1}$ is a spacelike rotational surface of type I  given by \eqref{type1izR} having same Gauss map in $\mathbb{E}^4_{1}$.
From Theorem \ref{type1thm2}, 
we know that they are hyperplanar and minimal. 
Then, we get that $z(u)=c_1$ and $a(u)=0$. 
Thus, the equation \eqref{type1eq7} gives
\begin{equation}
\label{coreq1}
    b^2(u)=\frac{x^2(u)w'^2(u)+\lambda^2x'^2(u)}{x^2(u)x'^2(u)}.
\end{equation}
Since $R_1$ is minimal, from the equation \eqref{type1izm1} 
we have the following differential equation
\begin{equation}
\label{coreq2}
 (x^2(u)-\lambda^2)b'(u)+x(u)x'(u)b(u)=x(u)x'(u)b^3(u)  
\end{equation}
which is a Bernoulli equation. 
Then, the general solution of this equation is found as 
\begin{equation}
\label{coreq3}
   b^2(u)=\frac{1}{1+c_3(x^2(u)-\lambda^2)}   
\end{equation}
for an arbitrary positive constant $c_3$.
Comparing the equations \eqref{coreq1} and \eqref{coreq3}, we get
\begin{equation}
 w(u)
 =\pm\sqrt{1-c_3\lambda^2}
 \int{\frac{x'(u)}{x(u)}\sqrt{\frac{x^2(u)-\lambda^2}{1+c_3(x^2(u)-\lambda^2)}}}du 
\end{equation}
whose solution is given by \eqref{eqcor1} for $0<c_3<\frac{1}{\lambda^2}$. 
Moreover, using the last component of $R_1(u,v)$ in \eqref{type1izR}, 
we have
\begin{equation}
\int\frac{x(u)x'(u)}{\sqrt{(x^2(u)-\lambda^2)(1+c_3(x^2(u)-\lambda^2))}}du=\pm\frac{1}{\sqrt{c_3}}\arcsinh{\sqrt{c_3(x^2(u)-\lambda^2)}}+c_4
\end{equation}
for any arbitrary constant $c_4$.
\end{proof}
We remark that by taking $x(u)=u$ in Theorem \ref{type1thm2a}, the rotational surface given by \eqref{izodönel1} turned into that is given in \cite{Ikawa2}.

In Theorem \ref{type1thm2a}, 
if we take $c_3=\frac{1}{\lambda^2}$, then we obtain $w(u)=0$.
Thus, we give the following corollary.

\begin{corollary}
\label{type1cor1}
A spacelike right helicoidal surface of type I 
given by
\begin{equation}
\label{type1izaGHR}
   X_{1}(u,v) =(x(u)\cos v,x(u)\sin v,c_1,\lambda v)
\end{equation}
is isometric to a spacelike rotational surface given by
\begin{equation}
\label{izodönel1}
  R_{1}(u,v)=\begin{bmatrix}
\sqrt{x^2(u)-\lambda^2}\cos{v}\\ \sqrt{x^2(u)-\lambda^2}\sin{v}\\
c_2\\
\pm\arcsinh{\sqrt{x^2(u)-\lambda^2}}+c_4
\end{bmatrix}
\end{equation}
so that helices on the spacelike right helicoidal surface of type I correspond to parallel circles on the spacelike rotational surface
where $c_1, c_2$ and $c_4$ are arbitrary constants. 
Moreover, the mean curvatures of two surfaces are zero and the Gauss maps of them are same. 
\end{corollary}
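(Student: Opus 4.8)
The plan is to deduce this statement as the special case $c_3=\tfrac{1}{\lambda^2}$ of Theorem \ref{type1thm2a}, and then to double-check minimality and equality of the Gauss maps directly from the formulas already derived, since here one starts from a prescribed right helicoidal surface rather than from the hypothesis of a common Gauss map. First I would verify that $c_3=\tfrac1{\lambda^2}$ indeed forces $w\equiv 0$: in \eqref{eqcor1} the coefficient $\sqrt{(1-c_3\lambda^2)/c_3}$ of the $\arcsinh$ term vanishes, and the argument of the $\arctan$ term, which also carries the factor $\sqrt{1-c_3\lambda^2}$, is $0$, so $w(u)=0$ and $X_1$ is the right helicoidal surface \eqref{type1izaGHR}. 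Substituting $c_3=\tfrac1{\lambda^2}$ (so $1/\sqrt{c_3}=\lambda$ and $c_3(x^2-\lambda^2)=(x^2-\lambda^2)/\lambda^2$) into the fourth coordinate of the rotational surface in Theorem \ref{type1thm2a} gives its fourth coordinate here, while hyperplanarity ($z\equiv c_1$, hence $a\equiv 0$) collapses the third coordinate $\int \tfrac{a(u)\,x(u)x'(u)}{\sqrt{x^2(u)-\lambda^2}}\,du$ to the constant $c_2$.

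Next I would record the isometry and the helix--circle correspondence. Since \eqref{type1izaGHR} is exactly a surface of the form \eqref{eq9} with $w$ and $z$ constant, Theorem \ref{type1thm1} applies verbatim and produces the isometric spacelike rotational surface; moreover $w'\equiv 0$ makes the shift $\int\tfrac{\lambda w'(u)}{x^2(u)-\lambda^2}\,du$ a constant, which is absorbed into $v$ by a rotation of $\mathbb{E}^4_1$ fixing the $x_1x_2$-plane, yielding the first two coordinates $\sqrt{x^2-\lambda^2}\cos v$ and $\sqrt{x^2-\lambda^2}\sin v$ of \eqref{izodönel1}. As in the proof of Theorem \ref{type1thm1}, a helix $X_1(u_0,v)$ is carried to the parallel circle of radius $\sqrt{x^2(u_0)-\lambda^2}$ in the $x_1x_2$-plane.

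For minimality I would substitute $z'\equiv 0$ and $w'\equiv 0$ into \eqref{type1eq5}: every term of $H_1^{X_1}$ and of $H_2^{X_1}$ contains a factor $z'$ or $w'$, so both vanish and $X_1$ is minimal. On the rotational side, \eqref{type1eq7} with $a\equiv 0$, $z'\equiv 0$, $w'\equiv 0$ gives $b^2(u)=\lambda^2/x^2(u)$; feeding $a\equiv 0$ and $b=\pm\lambda/x$ (hence $b'=\mp\lambda x'/x^2$) into \eqref{type1izm1} makes $H_1^{R_1}=0$ at once, and the numerator of $H_2^{R_1}$ reduces to $-(x^2-\lambda^2)b'+bxx'(b^2-1)$, which is identically $0$ for $b=\pm\lambda/x$; thus $R_1$ is minimal. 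Finally, for the Gauss maps I would specialize \eqref{type1GH} and \eqref{type1GR}: with $z'\equiv 0$ and $w'\equiv 0$, \eqref{type1GH} becomes $\tfrac1{\sqrt W}\big(xx'\eta_{12}+\lambda x'\cos v\,\eta_{14}+\lambda x'\sin v\,\eta_{24}\big)$, and \eqref{type1GR} with $a\equiv 0$ and $b=\pm\lambda/x$ becomes the same bivector once the two rotational parameters are aligned (equivalently, after the rotation of $\mathbb{E}^4_1$ already used above); hence $\nu_{X_1}=\nu_{R_1}$.

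The step I expect to need the most care is precisely this last alignment of parameters: because $w\equiv 0$ the antiderivative $\int\tfrac{\lambda w'(u)}{x^2(u)-\lambda^2}\,du$ is only determined up to an additive constant, and one must fix that constant (equivalently, choose the sign in $b=\pm\lambda/x$ together with a rotation of the $x_1x_2$-plane) consistently so that the coordinate functions in \eqref{izodönel1} and the bivectors $\nu_{X_1}$, $\nu_{R_1}$ match simultaneously. One should also note that $X_1$ and $R_1$ remain spacelike here, which is automatic since $W=(x^2-\lambda^2)x'^2>0$ when $z'=w'=0$, $x^2>\lambda^2$ and $x'\neq 0$.
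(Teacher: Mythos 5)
Your proposal is correct and follows essentially the same route as the paper: the corollary is obtained there by setting $c_3=\tfrac{1}{\lambda^2}$ in Theorem \ref{type1thm2a}, which forces $w\equiv 0$, and the paper gives no further argument beyond this specialization. Your extra direct checks of minimality via \eqref{type1eq5} and \eqref{type1izm1} with $a\equiv 0$, $b=\pm\lambda/x$, and of the Gauss maps (including the careful alignment of the constant phase $\int\lambda w'/(x^2-\lambda^2)\,du$) go beyond what the paper writes and in fact expose a harmless slip in the printed statement, whose fourth coordinate should read $\pm\lambda\arcsinh\bigl(\sqrt{x^2(u)-\lambda^2}/\lambda\bigr)+c_4$ rather than $\pm\arcsinh\sqrt{x^2(u)-\lambda^2}+c_4$.
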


Now, we give an example by using Theorem \ref{type1thm2a}. 

\begin{example}
We consider the spacelike profile curve $\beta_{1}$ as follows:
\[\beta_{1} (u)=\left(u, 0, c_1, w(u)\right).\]
If $\lambda=1$, $c_3 =\frac{1}{2}$ $c_4=0$, then isometric surfaces in \eqref{type1izaGH} and \eqref{izodönel1} are given as follows
\[X_{1}(u,v)=\left(u\cos v, u\sin v,c_1,\pm\left(\arcsinh{\left(\sqrt{\frac{u^2-1}{2}}\right)
}-\arctan{\left(\sqrt{\frac{u^2-1}{u^2+1}}\right)}\right)+v\right)\]
and
\begin{equation*}
\label{type1izaGR}
R_{1}(u,v)=\begin{bmatrix}
\sqrt{u^2-1}\cos{\left(v-\frac{1}{u\sqrt{u^4-1}}\right)}\\ \sqrt{u^2-1}\sin{\left(v-\frac{1}{u\sqrt{u^4-1}}\right)}\\
c_2\\
\pm\sqrt{2}\arcsinh{\left(\frac{u^2-1}{\sqrt{2}}\right)}
\end{bmatrix}.    
\end{equation*}
For $1.1\leq u\leq\pi$ and $0\leq v<2\pi$,  
the graphs of spacelike helicoidal surface $X_{1}$ and spacelike rotational surface $R_{1}$ in $\mathbb{E}^3_1$ can be plotted by using Mathematica 10.4 shown in Figure \ref{fig1}.
\begin{figure}[htbp]
\centering
\includegraphics[height=60mm]{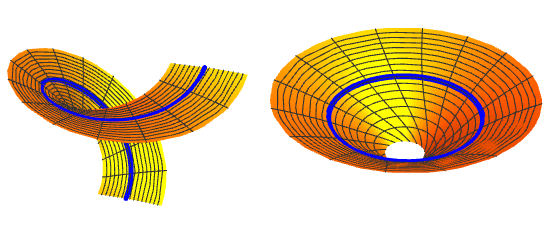}
\caption {\textbf{Left:} Spacelike helicoidal surface of type I; helix, \textbf{Right:} Spacelike rotational surface of type I; circle.}
\label{fig1}
\end{figure}
\end{example}

\subsection{Helicoidal Surface of Type II}
Assume that $X_2$ is a spacelike helicoidal surface of type II in $\mathbb{E}^4_1$ given by \eqref{eq10}. 
From a simple calculation, we have 
\begin{equation}
\label{type2eq2}
g_{11}= x'^2(u)+y'^2(u)-w'^2(u),\;\;g_{12}=g_{21}=-\lambda x'(u),\;\;
g_{22}=w^2(u)+\lambda^2 
\end{equation}
with ${W}=(w^2(u)+\lambda^2)(y'^2(u)-w'^2(u))+x'^2(u)w^2(u)>0$
for all $u\in I\subset\mathbb{R}$.
Without loss of generality, we assume $w'^2(u)-y'^2(u)>0$ for all 
$u\in I\subset\mathbb{R}$. 
Then, we choose an orthonormal frame field 
$\{e_{1},e_{2}, N_{1}, N_{2}\}$ on $X_2$ such that $e_1, e_2$ are tangent to $X_2$ and $N_1, N_2$ are normal to $X_2$:
\begin{align}
\label{type2eq3}
\begin{split}
e_{1}&=\frac{1}{\sqrt{g_{11}}}X_{2u},\;\;\;\;
e_{2}=\frac{1}{ \sqrt{Wg_{11}}}(g_{11}X_{2v}-{g_{12}X_{2u}}), \\
N_{1}&=\frac{1}{\sqrt{w'^2-y'^2}}(0, w', y'\sinh v, y'\cosh v),\\
N_{2}=&\frac{1}{\sqrt{W(w'^2-y'^2)}}(w(w'^2-y'^2),x'y'w, x'ww'\sinh{v}-\lambda(w'^2-y'^2)\cosh{v},\\
&x'ww'\cosh{v}-\lambda(w'^2-y'^2)\sinh{v})
\end{split}
\end{align}
with $\langle e_{1},e_{1}\rangle=\langle e_{2},e_{2}\rangle=\langle N_{1},N_{1}\rangle=-\langle N_{2},N_{2}\rangle=1$. 
By a direct computation, we get the coefficients of the second fundamental form as follows
\begin{align}
\label{type2eq4}
    \begin{split}
     &b_{11}^1=\frac{y'' w'- y' w''}{\sqrt{w'^2-y'^2}},\;\;\; 
     b_{12}^1=b_{21}^1=0,\;\;\;
     b_{22}^1=\frac{-w y'}{\sqrt{w'^2-y'^2}},\\
     &b_{11}^2=\frac{w(x'(y'y''-w'w'')+x''(w'^2-y'^2))}
    {\sqrt{W(w'^2-y'^2)}},\;\; 
     b_{12}^2=b_{21}^2=\frac{-\lambda w'\sqrt{w'^2-y'^2}}{\sqrt{W}},\\
     &b_{22}^2=\frac{- x'w^2w'}{\sqrt{W(w'^2-y'^2)}}.
    \end{split}
\end{align}
Thus, we find the components of the mean curvature $H_{i}^{X_{2}},\;(i=1,2)$ 
and from the equation \eqref{eq6}, we compute the Gauss curvature $K^{X_{2}}$ of $X_2$ in $\mathbb{E}^4_1$, respectively, as follows
\begin{align}
\label{type2eq5} 
&H_{1}^{X_{2}}=\frac{-wy'(x'^2+y'^2-w'^2)+(w^2+
\lambda^2)(y''w'-y'w'')}{2W\sqrt{w'^2-y'^2}},\\
&H_{2}^{X_{2}}=\frac{x'w'(2\lambda^2+w^2)(w'^2-y'^2)-w^2w'x'^3+w(\lambda^2+w^2)(x''(w'^2-y'^2)+x'(y'y''-w'w''))}
{2{W}^{3/2}\sqrt{w'^2-y'^2}}
\end{align}
and 
\begin{equation}
\label{type2eq6}
K^{X_{2}}=\frac{w^3(x'(w'x''-w''x')+y'(w'y''-w''y'))+ \lambda^2(w y'(w'y''-w''y')+w'^2(w'^2-y'^2))}{W^2}.
\end{equation}

The following theorem is a generalization of the classical Bour's theorem  for helicoidal surfaces of type II in $\mathbb{E}^4_{1}$.
\begin{theorem}
\label{type2thm1}
A spacelike helicoidal surface of type II in $\mathbb{E}^4_{1}$ given by \eqref{eq10} is isometric to a spacelike rotational surface 
\begin{equation}
\label{type2izR}
  R_{2}(u,v)=\begin{bmatrix}
\int{\frac{a(u)w(u)w'(u)}{\sqrt{\lambda^2+w^2(u)}}du}\\
\int{\frac{b(u)w(u)w'(u)}{\sqrt{\lambda^2+w^2(u)}}du}\\
\sqrt{\lambda^2+w^2(u)}\sinh{\left(v+\int{\frac{\lambda x'(u)}{\lambda^2+w^2(u)}du}\right)}\\ \sqrt{\lambda^2+w^2(u)}\cosh{\left(v+\int{\frac{\lambda x'(u)}{\lambda^2+w^2(u)}du}\right)}
\end{bmatrix}
\end{equation}
so that helices on the spacelike helicoidal surface of type II correspond to parallel ellipse on the spacelike rotational surface, where $a(u)$ and $b(u)$ are differentiable functions satisfying the following equation:
 \begin{equation}
 \label{type2eq7}
    a^2(u)+b^2(u)
    =\frac{w^2(u)(x'^2(u)+y'^2(u))+\lambda^2(y'^2(u)-w'^2(u))}{w^2(u)w'^2(u)}
 \end{equation}
with $w'(u)\not=0$ for all $u\in I\subset\mathbb{R}$.
\end{theorem}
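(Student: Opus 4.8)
The plan is to proceed exactly as in the proof of Theorem~\ref{type1thm1}, with the hyperbolic functions $(\sinh,\cosh)$ playing the role of the trigonometric functions $(\cos,\sin)$. From \eqref{type2eq2} the induced metric of $X_{2}$ is
\[
ds^{2}_{X_{2}}=g_{11}(u)\,du^{2}+2g_{12}(u)\,du\,dv+g_{22}(u)\,dv^{2},
\]
and the first step is to find the $v$-parameter curve of $X_{2}$, i.e. the curve meeting each helix $u=\mathrm{const}$ orthogonally; its defining equation is $g_{12}\,du+g_{22}\,dv=0$, which integrates to the coordinate
\[
\overline v=v+\int\frac{\lambda x'(u)}{w^{2}(u)+\lambda^{2}}\,du
\]
occurring in \eqref{type2izR}. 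Substituting $dv=d\overline v-\dfrac{\lambda x'(u)}{w^{2}(u)+\lambda^{2}}\,du$ into $ds^{2}_{X_{2}}$ cancels the mixed term and leaves the diagonal form $ds^{2}_{X_{2}}=\dfrac{W}{w^{2}(u)+\lambda^{2}}\,du^{2}+\bigl(w^{2}(u)+\lambda^{2}\bigr)\,d\overline v^{2}$.

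Next I would write a generic spacelike rotational surface $R_{2}$ as $R_{2}(k,t)=\bigl(s(k),r(k),n(k)\sinh t,n(k)\cosh t\bigr)$, whose induced metric is $ds^{2}_{R_{2}}=(s'^{2}(k)+r'^{2}(k)-n'^{2}(k))\,dk^{2}+n^{2}(k)\,dt^{2}$ with $s'^{2}+r'^{2}-n'^{2}>0$, and match it with the diagonalized metric of $X_{2}$. This forces $u=k$, $\overline v=t$, $n(k)=\sqrt{w^{2}(u)+\lambda^{2}}$ (so that $n'=ww'/\sqrt{w^{2}+\lambda^{2}}$) and $s'^{2}+r'^{2}-n'^{2}=W/(w^{2}+\lambda^{2})$. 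Setting $a(u)=s'(u)/n'(u)$ and $b(u)=r'(u)/n'(u)$, which is legitimate since $w'(u)\ne0$, and simplifying this last equation with the explicit expression of $W$ from \eqref{type2eq2}, one obtains precisely the relation \eqref{type2eq7}; integrating $s'=a\,n'$ and $r'=b\,n'$ then yields the first two coordinates of $R_{2}$ as in \eqref{type2izR}, the last two being $n\sinh\overline v$ and $n\cosh\overline v$ rewritten in terms of $v$ via the formula for $\overline v$. Finally, fixing $u=u_{0}$ sends the helix $X_{2}(u_{0},v)$ to the parallel $v\mapsto\bigl(\mathrm{const},\mathrm{const},\sqrt{w^{2}(u_{0})+\lambda^{2}}\sinh v,\sqrt{w^{2}(u_{0})+\lambda^{2}}\cosh v\bigr)$ of $R_{2}$, which is the asserted correspondence between helices and parallels.

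Since nearly every step is a direct transcription of the Type~I argument, I do not expect a genuine obstacle; the point requiring most care is checking that the coefficient of $du^{2}$ after diagonalization, namely $W/(w^{2}(u)+\lambda^{2})$, is positive, so that $R_{2}$ is indeed a \emph{spacelike} rotational surface. This uses only the hypothesis $W>0$ (together with $w^{2}+\lambda^{2}>0$ and $w'\ne0$); in particular the normalization $w'^{2}-y'^{2}>0$, imposed earlier only to write down the orthonormal frame on $X_{2}$, is not needed for the isometry itself.
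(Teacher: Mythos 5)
Your proposal is correct and follows essentially the same route as the paper's proof: pass to the coordinate $\bar v=v+\int\frac{\lambda x'(u)}{\lambda^2+w^2(u)}du$ orthogonal to the helices, diagonalize the induced metric (your coefficient $W/(w^2(u)+\lambda^2)$ agrees with the paper's expression for the $du^2$-term), match it with the metric of a rotational surface with $n(k)=\sqrt{w^2(u)+\lambda^2}$, and define $a,b$ as the ratios of the derivatives, which yields \eqref{type2eq7} and the parametrization \eqref{type2izR}. Note only that your signs tacitly use the correct value $g_{12}=+\lambda x'(u)$, which in fact corrects a sign typo in the paper's displayed first fundamental form while matching the rest of its computation.
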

 
\begin{proof}
Let $X_2$ be a spacelike helicoidal surface in $\mathbb{E}^4_1$ 
defined by \eqref{eq10}. 
From the equation \eqref{type2eq2}, we have the induced metric of $X_2$
as follows:
\begin{equation}
\label{type2mtrc1}
 ds^2 _{X_2}=(x'^2(u)+y'^2(u)-w'^2(u))du^2-2\lambda x'(u)dudv + (\lambda^2+w^2(u))dv^2.
\end{equation}
Then, we find a curve on $X_2$ which is orthogonal to a helix on $X_2$, that is, this curve is $v$--parameter curve of $X_2$. 
From orthogonality condition, we get 
\begin{equation}
\label{type2eq8}
    \lambda x'(u)du+(\lambda^2+w^2(u))dv=0.
\end{equation}
By solving the equation \eqref{type2eq8}, we find 
\begin{equation}
    v=c-\int{\frac{\lambda x'(u)}{\lambda^2+w^2(u)}du},
\end{equation}
where c is an arbitrary constant.
If we take $\bar{v}=c$, then we get
\begin{equation}
\label{type2eq8-a}
    \overline{v}=v+\int{\frac{\lambda x'(u)}{\lambda^2+w^2(u)}du}.
\end{equation}
From equation \eqref{type2eq8-a}, we can easily obtain the following equation:
\begin{equation}
\label{type2eq9}
    dv=d\overline{v}-\frac{\lambda x'(u)}{\lambda^2+w^2(u)}du.
\end{equation}
Substituting the equation \eqref{type2eq9} in \eqref{type2mtrc1}, 
the induced metric of $X_2$ becomes
\begin{equation}
\label{type2eq10}
 ds^2 _{X_2}=\left(x'^2(u)+y'^2(u)-w'^2(u)-\frac{\lambda^2 x'^2(u)}{\lambda^2+w^2(u)}\right)du^2 + (\lambda^2+w^2(u))d\overline{v}^2.
\end{equation}
On the other hand, 
the spacelike rotational surface $R_2$ in $\mathbb{E}^4_{1}$
related to $X_2$ is given by 
\begin{equation}
\label{type2rtk}
    R_{2}(k,t)=(n(k),s(k),r(k)\sinh{t},r(k)\cosh{t}).
\end{equation}
We know that the induced metric of $R_2$ is given by
\begin{equation}
\label{type2eq11}
 ds^2 _{R_2}=(n'^2(k)+s'^2(k)-r'^2(k))dk^2 + r^2(k)dt^2
\end{equation}
with $n'^2(k)+s'^2(k)-r'^2(k)>0$. 
Comparing the equations \eqref{type2eq10} and \eqref{type2eq11}, 
we get an isometry between $X_2$ and $R_2$ by taking
$u=k, \bar{v}=t$, $r(k)=\sqrt{\lambda^2+w^2(u)}$ 
and 
\begin{equation}
\label{type2eq12}
 n'^2(k)+s'^2(k)-r'^2(k)= x'^2(u)+y'^2(u)-w'^2(u)-\frac{\lambda^2 x'^2(u)}{\lambda^2+w^2(u)}. 
\end{equation}
Also, we say $a(u)=\frac{n'(u)}{r'(u)}$ and 
$b(u)=\frac{s'(u)}{r'(u)}$. Then, we find 
\begin{equation}
  n(u)=\int{\frac{a(u)w(u)w'(u)}
  {\sqrt{\lambda^2+w^2(u)}}du}\;\;\mbox{and}\;\; s(u)=\int{\frac{b(u)w(u)w'(u)}
  {\sqrt{\lambda^2+w^2(u)}}du}.
\end{equation}
Thus, we get the spacelike rotational surface $R_2$ given by \eqref{type2izR} which is isometric to $X_2$. 
If we choose a helix as $X_2(u_0,v)$, 
where $u_{0}$ is an arbitrary constant, 
it corresponds to the parallel ellipse $R_{2}(u_0,v)=(0,0,\sqrt{\lambda^2+w^2(u_0)}\sinh{v},\sqrt{\lambda^2+w^2(u_0)}\cosh{v})$ lying on the $x_3x_4$--plane (see \cite{GKG} for details). 
\end{proof} 

For later use, we find the components of the mean curvature vector
of the spacelike rotational surface $R_2$ given by \eqref{type2izR} as follows:
\begin{align}
\label{type2izm1}
\begin{split}
H_1^{R_2}&=\frac{b'(\lambda^2+w^2)+bww'(-1+a^2+b^2))}
{2\sqrt{(1-b^2)(w^2+\lambda^2)(-1+a^2+b^2)}},\\
H_2^{R_2}&=
\frac{-ww'(\lambda^2+w^2)(a'(1+b^2)+abb')-aw'^2(2\lambda^2b^2+w^2(a^2+b^2-1))-2ab^2w''(w^2+\lambda^2)}{2ww'\sqrt{(1-b^2)(a^2+b^2-1)(\lambda^2+w^2)}}.
\end{split}
\end{align}
Then, we consider isometric surfaces according to Bour's theorem whose Gauss maps are same.

\begin{theorem}
\label{type2thm2}
Let $X_{2}$ be a spacelike helicoidal surface of type II given by \eqref{eq10} and $R_{2}$ be a spacelike rotational surface  of type II given by \eqref{type2izR} in $\mathbb{E}^4_{1}$. If they have the same Gauss map, then they are hyperplanar and minimal.
\end{theorem}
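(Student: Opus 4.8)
The plan is to mirror the proof of Theorem \ref{type1thm2}, replacing the elliptic rotation by the hyperbolic one. First I would write the two Gauss maps explicitly via \eqref{eq8}. For $X_2$ one computes $X_{2u}\wedge X_{2v}$ from $X_{2u}=(x',y',w'\sinh v,w'\cosh v)$ and $X_{2v}=(\lambda,0,w\cosh v,w\sinh v)$; after dividing by $\sqrt{W}$ this gives a combination in the basis $\{\eta_{ij}\}$ of $\bigwedge^2\mathbb{E}^4_1$ with coefficient $-\lambda y'$ on $\eta_{12}$, the mixed terms $x'w\cosh v-\lambda w'\sinh v$ and $x'w\sinh v-\lambda w'\cosh v$ on $\eta_{13},\eta_{14}$, the terms $y'w\cosh v$ and $y'w\sinh v$ on $\eta_{23},\eta_{24}$, and $-ww'$ on $\eta_{34}$. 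For $R_2$ as in \eqref{type2izR}, writing $\phi(u)=\int\frac{\lambda x'(u)}{\lambda^2+w^2(u)}du$ and using $n'=aww'/\sqrt{\lambda^2+w^2}$, $s'=bww'/\sqrt{\lambda^2+w^2}$, the wedge $R_{2u}\wedge R_{2v}$ yields $0$ on $\eta_{12}$, the terms $aww'\cosh(v+\phi)$, $aww'\sinh(v+\phi)$ on $\eta_{13},\eta_{14}$, the terms $bww'\cosh(v+\phi)$, $bww'\sinh(v+\phi)$ on $\eta_{23},\eta_{24}$, and $-ww'$ on $\eta_{34}$.

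Equating the two Gauss maps componentwise gives a system of five nontrivial equations (the $\eta_{34}$ ones coincide automatically). The $\eta_{12}$-equation reads $\lambda y'(u)=0$, so $y'\equiv 0$ since $\lambda\neq 0$; hence $y$ is constant and $X_2$ lies in the hyperplane $x_2=\mathrm{const}$. Substituting $y'=0$ into the $\eta_{23}$- and $\eta_{24}$-equations, and invoking $w(u)\neq 0$ together with the standing assumption $w'(u)\neq 0$, forces $b(u)\equiv 0$; hence $s$ is constant and $R_2$ also lies in a hyperplane $x_2=\mathrm{const}$. Thus both surfaces are hyperplanar, i.e. contained in $\mathbb{E}^3_1$. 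Minimality in the first normal direction is then immediate: plugging $y'=y''=0$ into \eqref{type2eq5} yields $H_1^{X_2}=0$, and plugging $b=b'=0$ into \eqref{type2izm1} yields $H_1^{R_2}=0$.

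For the second normal direction I would again follow Theorem \ref{type1thm2}. Setting $b=0$ in the constraint \eqref{type2eq7} expresses $a^2$ in terms of $x',w,w'$; substituting this into the formula for $H_2^{R_2}$ in \eqref{type2izm1} and simplifying should show that $H_2^{R_2}$ is a nowhere-vanishing multiple of $H_2^{X_2}$ (the analogue of $H_2^{R_1}=x^2w'\,H_2^{X_1}$), so it suffices to prove $H_2^{X_2}=0$. To obtain this, I would return to the $\eta_{13}$- and $\eta_{14}$-equations and match the $\cosh v$ and $\sinh v$ coefficients separately; this gives $x'(u)w(u)=a(u)w(u)w'(u)\cosh\phi(u)$ and $\lambda w'(u)=-a(u)w(u)w'(u)\sinh\phi(u)$, whence $\dfrac{x'(u)w(u)}{\lambda w'(u)}=-\coth\phi(u)$. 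Differentiating this identity in $u$ and clearing denominators produces a second-order ODE in $x$ and $w$; the remaining task is to check that this ODE is precisely the vanishing of the numerator of $H_2^{X_2}$ (with $y'=0$). Once that is verified, $H_2^{X_2}=H_2^{R_2}=0$ and both surfaces are minimal.

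The real work is that final identification — verifying that the differentiated $\coth$-relation coincides with $H_2^{X_2}=0$, together with the cancellation that gives $H_2^{R_2}\propto H_2^{X_2}$ after using \eqref{type2eq7}. Everything else is a direct transcription of the type I argument, with $\sinh,\cosh$ in place of $\sin,\cos$ and the sign of $\lambda^2$ reversed, plus attention to the standing conventions $w\neq 0$, $w'\neq 0$, and $w'^2-y'^2>0$ which become $|w'|>0$ once $y'=0$.
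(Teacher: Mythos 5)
Your proposal follows essentially the same route as the paper's proof: equate the Gauss maps componentwise, use the $\eta_{12}$-equation to get $y'=0$ and the $\eta_{23},\eta_{24}$-equations to get $b=0$ (hence hyperplanarity and $H_1^{X_2}=H_1^{R_2}=0$), then use the constraint \eqref{type2eq7} to express $H_2^{R_2}$ as a nonzero multiple of $H_2^{X_2}$, and finally differentiate the $\coth$-relation extracted from the $\eta_{13},\eta_{14}$-equations to obtain exactly the vanishing of the numerator of $H_2^{X_2}$, which is the paper's equation \eqref{type2eq24}. The two computational verifications you defer are precisely the ones the paper carries out in \eqref{type2eq18b} and \eqref{type2eq24}, so the plan is correct and essentially identical to the published argument.
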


\begin{proof}
Let $\{\eta_{1},\eta_{2},\eta_{3},\eta_{4}\}$ be the standard 
orthonormal bases of $\mathbb{E}_1^4$.
Denote $\eta_{ij}=\eta_{i} \wedge \eta_{j}$ for ${i,j=1,2,3,4}$ and $i<j$.
Then, using equation \eqref{eq8}, 
we get the Gauss maps of
$X_2$ and $R_2$, respectively, as follows
\begin{align}
\label{type2GH}
\nu_{X_{2}}=&\frac{1}{\sqrt{W}}(-\lambda y'\eta_{12}+(x'w\cosh{v}-\lambda w'\sinh{v})\eta_{13}+( x'w\sinh{v}-\lambda w'\cosh{v})\eta_{14}\notag\\&+y'w\cosh{v}\eta_{23}   
+y'w\sinh{v}\eta_{24}-ww'\eta_{34})
\end{align}
and
\begin{align}
\label{type2GR}
\begin{split}
\nu_{R_{2}}=&\frac{1}{\sqrt{W}}
\Bigg(aww'\cosh\left(v+\int\frac{\lambda x'(u)}{\lambda^2+w^2(u)}du\right)\eta_{13}+aww'\sinh\left(v+\int\frac{\lambda x'(u)}{\lambda^2+w^2(u)}du\right)\eta_{14}
\\
&
+bww'\cosh\left(v+\int\frac{\lambda x'(u)}{\lambda^2+w^2(u)}du\right)\eta_{23} +bww'\sinh\left(v+\int\frac{\lambda x'(u)}{\lambda^2+w^2(u)}du\right)\eta_{24}
\\
&
-ww'\eta_{34}
\Bigg).
\end{split}
\end{align}
If $X_2$ and $R_2$ have the same Gauss map, 
then comparing \eqref{type2GH} and \eqref{type2GR}, we get the following system of equations
\begin{equation}
  \label{type2eq12}
    \lambda y'(u)=0,
\end{equation}
\begin{equation}
  \label{type2eq13}
    x'(u)w(u)\cosh{v}-\lambda w'(u)\sinh{v}=a(u)
    w(u)w'(u)\cosh\left(v+\int\frac{\lambda x'(u)}{\lambda^2+w^2(u)}du\right),
\end{equation}
\begin{equation}
  \label{type2eq14}
    x'(u)w(u)\sinh{v}-\lambda w'(u)\cosh{v}=a(u)
    w(u)w'(u)\sinh\left(v+\int\frac{\lambda x'(u)}{\lambda^2+w^2(u)}du\right),
\end{equation}
\begin{equation}
  \label{type2eq15}
    y'(u)w(u)\cosh{v}=b(u)w(u)w'(u)\cosh\left(v+\int\frac{\lambda x'(u)}{\lambda^2+w^2(u)}du\right),
\end{equation}
\begin{equation}
  \label{type2eq16}
     y'(u)w(u)\sinh{v}=b(u)w(u)w'(u)\sinh\left(v+\int\frac{\lambda x'(u)}{\lambda^2+w^2(u)}du\right).
\end{equation}
Since $\lambda \neq 0 $, $y'(u)=0$ from equation \eqref{type2eq12}.
Then, the equations \eqref{type2eq15} and \eqref{type2eq16}
implies $b(u)=0$. 
Therefore, it can be easily seen that 
the surfaces $X_{2}$ and $R_{2}$ are hyperplanar,
that is, they are lying in $\mathbb{E}^3_1$. 
In this case, the equation \eqref{type2eq5} and \eqref{type2izm1} gives the components of the mean curvatures of $X_2$ and $R_2$ 
\begin{equation}
\label{type2eq17}
H_{1}^{X_{2}}=H_{1}^{R_{2}}=0
\end{equation}
and 
\begin{align}
\label{type2eq18}
\begin{split}
H_{2}^{X_{2}}&=\frac{x'w'^2(2\lambda^2+w^2)
-w^2x'^3+w(\lambda^2+w^2)(x''w'-x'w'')}{2{W}^{3/2}},\\
H_{2}^{R_{2}}&=
\frac{\sqrt{w^2+\lambda^2}(w^2+\lambda^2)a'+aww'(a^2-1)}
{2W\sqrt{a^2-1}}.
\end{split}
\end{align}
On the other hand, the equation \eqref{type2eq7} gives 
\begin{equation}
 \label{type2eq18a}   
 a^2(u)=\frac{w^2(u)x'^2(u)-\lambda^2 w'^2(u)}{w^2(u)w'^2(u)}. 
\end{equation}
Using the equation \eqref{type2eq18a} in \eqref{type2eq18}, we get
\begin{equation}
 \label{type2eq18b}
H_{2}^{R_{2}}=\frac{w^2x'\sqrt{w^2+\lambda^2}(x'w'^2(2\lambda^2+w^2)
-w^2x'^3+w(\lambda^2+w^2)(x''w'-x'w''))}{2W^{3/2}\sqrt{w^2x'^2-w'^2\lambda^2}}.    
\end{equation}
Thus, it can be seen that $H_2^{R_2}=\frac{w^2x'\sqrt{w^2+\lambda^2}}{W^{1/2}\sqrt{(w^2x'^2-w'^2\lambda^2)(w^2x'^2-w'^2(\lambda^2+w^2))}}H_2^{X_2}$. 
Moreover, using equations \eqref{type2eq14} and \eqref{type2eq15}, we obtain the following equations
\begin{equation}
\label{type2eq21}
    x'(u)w(u)=a(u)w(u)w'(u)\cosh{\left(\int\frac{\lambda x'(u)}{\lambda^2+w^2}du\right)},
\end{equation}
\begin{equation}
\label{type2eq22}
    \lambda w'(u)=-a(u)w(u)w'(u)\sinh{\left(\int\frac{\lambda x'(u)}{\lambda^2+w^2}du\right)}.
\end{equation}
Considering the equations \eqref{type2eq21} and \eqref{type2eq22} together, we have
\begin{equation}
\label{type2eq23}
    \frac{-x'(u)w(u)}
    {\lambda w'(u)}=\coth{\left(\int\frac{\lambda x'(u)}{\lambda^2+w^2}du\right)}.
\end{equation}
If we take the derivative of the equation \eqref{type2eq23} with respect to $u$, \eqref{type2eq23} becomes
\begin{equation}
\label{type2eq24}
    \lambda(x'w'^2(2\lambda^2+w^2)
-w^2x'^3+w(\lambda^2+w^2)(x''w'-x'w''))=0
\end{equation}
which implies $H_{2}^{X_{2}}=H_{2}^{R_{2}}=0$ in the equation 
\eqref{type2eq18}. 
Thus, we get the desired results.
\end{proof}

\begin{theorem}
\label{type2cor1}
Let $X_{2}$ be a spacelike helicoidal surfaces of type II given by \eqref{eq10} and $R_{2}$ be a spacelike rotational surface given by \eqref{type2izR} in $\mathbb{E}^4_{1}$. If they  have  the  same  Gauss map, then their parametrizations are given by
\begin{equation}
\label{type2izaGH}
   X_{2}(u,v) =\left(x(u)+\lambda v,c_1,w(u)\sinh v,w(u)\cosh v\right)
\end{equation}
and
\begin{equation}
\label{type2izaGR}
  R_{2}(u,v)=\begin{bmatrix}
\pm\frac{1}{\sqrt{-c_3}}\arcsin{\sqrt{-c_3\left(\lambda^2+w^2(u)\right)}}\\
c_2\\
\sqrt{\lambda^2+w^2(u)}\sinh{\left(v-\int{\frac{\lambda x'(u)}{\lambda^2+w^2(u)}du}\right)}\\ \sqrt{\lambda^2+w^2(u)}\cosh{\left(v-\int{\frac{\lambda x'(u)}{\lambda^2+w^2(u)}du}\right)}\\
\end{bmatrix},
\end{equation}
where $c_1, c_2$ and $c_4$ are arbitrary constants, 
$-\frac{1}{\lambda^2}<c_3<0$ and  
\begin{equation}
\label{eqcor2}
\begin{split}
x(u)=&\pm\frac{1}{\sqrt{-c_3}}\Bigg(
\arcsin{\sqrt{-c_3(\lambda^2+w^2(u)}}
-\frac{\lambda\sqrt{-c_3}}{\sqrt{1+c_3\lambda^2}}
\arctan{\left(\frac{\sqrt{(1+c_3\lambda^2)(\lambda^2+w^2(u))}}
{\lambda \sqrt{1+c_3(\lambda^2+w^2(u))}}\right)}\Bigg).    
\end{split}
\end{equation}
\end{theorem}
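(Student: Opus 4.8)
The plan is to transcribe, line for line, the argument used for Theorem \ref{type1thm2a}, with the elliptic rotation replaced by the hyperbolic one. First I would invoke Theorem \ref{type2thm2}: since $X_2$ and $R_2$ have the same Gauss map they are hyperplanar and minimal, and the proof of that theorem has already produced $y'(u)=0$ and $b(u)=0$. Thus $y(u)=c_1$ is constant, the profile curve reduces to $\beta_2(u)=(x(u),c_1,0,w(u))$, so $X_2$ is exactly \eqref{type2izaGH}; and because $b\equiv 0$ the second slot $\int\frac{b w w'}{\sqrt{\lambda^2+w^2}}\,du$ of $R_2$ in \eqref{type2izR} is a constant, which I name $c_2$.

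Next I would feed $y'\equiv 0$ and $b\equiv 0$ into the isometry constraint \eqref{type2eq7}, which collapses to $a^2(u)=\dfrac{w^2(u)x'^2(u)-\lambda^2 w'^2(u)}{w^2(u)w'^2(u)}$, i.e.\ \eqref{type2eq18a}. A second, independent description of $a$ comes from minimality: setting $H_2^{R_2}=0$ in \eqref{type2izm1} with $b\equiv0$ gives a Bernoulli-type ODE for $a(u)$, solved exactly as in \eqref{coreq2}--\eqref{coreq3}, whose general solution is $a^2(u)=\dfrac{1}{1+c_3(\lambda^2+w^2(u))}$ for an integration constant $c_3$. Equating the two expressions for $a^2$ and solving for $x'$ yields $x'(u)=\pm\sqrt{1+c_3\lambda^2}\,\dfrac{w'(u)}{w(u)}\sqrt{\dfrac{\lambda^2+w^2(u)}{1+c_3(\lambda^2+w^2(u))}}$, and integrating this gives the closed form \eqref{eqcor2} for $x(u)$. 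Finally, with $a(u)=\pm\bigl(1+c_3(\lambda^2+w^2(u))\bigr)^{-1/2}$, the first component $\int\frac{a w w'}{\sqrt{\lambda^2+w^2}}\,du$ of $R_2$ becomes $\pm\int\frac{w w'}{\sqrt{(\lambda^2+w^2)(1+c_3(\lambda^2+w^2))}}\,du$, which for $c_3<0$ evaluates via the substitution $s=\lambda^2+w^2$ to $\pm\frac{1}{\sqrt{-c_3}}\arcsin\sqrt{-c_3(\lambda^2+w^2(u))}$ (plus an additive constant $c_4$), reproducing \eqref{type2izaGR}; the requirement that all radicands stay positive forces $-\frac{1}{\lambda^2}<c_3<0$, and $a^2-1=W/(w^2w'^2)>0$ is consistent with this.

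The delicate part is the explicit evaluation of $\int\frac{w'}{w}\sqrt{\frac{\lambda^2+w^2}{1+c_3(\lambda^2+w^2)}}\,du$ for \eqref{eqcor2}: after $s=\lambda^2+w^2(u)$ one must split $\frac{s}{(s-\lambda^2)\sqrt{s(1+c_3 s)}}$ into a piece integrating to an $\arcsin$ and a piece integrating to an $\arctan$, carrying the $\pm$ branches and the sign of $1+c_3\lambda^2$ through the whole computation. I also need to check that the shift in the hyperbolic argument of $R_2$, which appears as $v+\int\frac{\lambda x'(u)}{\lambda^2+w^2(u)}\,du$ in \eqref{type2izR}, agrees with the form written in \eqref{type2izaGR} once $x'$ is replaced by the expression found above, and that every sign choice is mutually consistent. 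Apart from that bookkeeping, the proof is a direct transcription of the type I case, with the roles of $(x,z)$ and $w$ interchanged and circular functions replaced by hyperbolic ones.
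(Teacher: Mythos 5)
Your proposal is correct and follows essentially the same route as the paper's own proof: invoke Theorem \ref{type2thm2} to get $y'(u)=0$ and $b(u)=0$, obtain $a^2$ from \eqref{type2eq7}, solve the Bernoulli equation arising from $H_2^{R_2}=0$ to get $a^2(u)=\frac{1}{1+c_3(\lambda^2+w^2(u))}$, equate the two expressions to integrate for $x(u)$ as in \eqref{eqcor2}, and evaluate the first component of $R_2$ to the $\arcsin$ form. The extra bookkeeping you flag (the sign of the hyperbolic-argument shift and the range of $c_3$) is exactly the same verification implicit in the paper's computation.
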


\begin{proof}
Assume that $X_{2}$ is a spacelike helicoidal 
surface of type II in $\mathbb{E}^4_{1}$ given by \eqref{eq10} and $R_{2}$ is a spacelike rotational surface of type II in $\mathbb{E}^4_{1}$ given by \eqref{type2izR} having same Gauss map. 
From Theorem \ref{type2thm2}, 
we know that they are hyperplanar and minimal. 
Then, we get that $y(u)=c_1$ and $b(u)=0$. 
Thus, the equation \eqref{type2eq7} gives
\begin{equation}
a^2(u)=\frac{x'^2(u)w^2(u)-\lambda^2 w'^2(u)}{w^2(u)w'^2(u)}.
\end{equation}
Since $R_2$ is minimal, from the equation \eqref{type2eq18} 
we have the following differential equation
\begin{equation}
\label{coreqtype2}
 (\lambda^2+w^2(u))a'(u)+w(u)w'(u)a(u)=w(u)w'(u)a^3(u)  
\end{equation}
which is a Bernoulli equation. 
Then, the general solution of this equation is found as 
\begin{equation}
\label{cortype2eq1}
   a^2(u)=\frac{1}{1+c_3(\lambda^2+w^2(u))}   
\end{equation}
for an arbitrary negative constant $c_3$. Comparing the equations \eqref{coreqtype2} and \eqref{cortype2eq1}, we get
\begin{equation}
 x(u)
 =\pm\sqrt{1+c_3\lambda^2}
 \int{\frac{w'(u)}{w(u)}
 \sqrt{\frac{w^2(u)+\lambda^2}{1+c_3(w^2(u)+\lambda^2)}}}du  
\end{equation}
whose solution is given by \eqref{eqcor2} for $\frac{-1}{\lambda^2}<c_3<0$.
Moreover, using the last component of $R_2(u,v)$ in \eqref{type2izR}, 
we have
\begin{equation}
\int\frac{w(u)w'(u)}
{\sqrt{(w^2(u)+\lambda^2)(1+c_3(w^2(u)+\lambda^2))}}du
=\pm\frac{1}{\sqrt{-c_3}}\arcsin{\sqrt{-c_3(w^2(u)+\lambda^2)}}+c_4
\end{equation}
for any arbitrary constant $c_4$.
\end{proof}
By taking $w(u)=u$ in Theorem \ref{type2cor1}, 
the rotational surface given by \eqref{type2izaGR} was obtained in \cite{Ikawa2}.

We note that if $x'(u)=0$, then the helicoidal surface of type II reduces to the right helicoidal surface of type II in $\mathbb{E}^4_1$.
From the equation \eqref{type2izR}, 
we find the parametrization of the rotational surface isometric to the right helicoidal surface.
Under condition that they have same Gauss maps, the causal character of the right helicoidal surface changes which gives a contradiction.
Thus, their Gauss maps are definitely different. 

Now, we give an example by using Theorem \ref{type2cor1}.

\begin{example}
We consider the spacelike profile curve $\beta_{2}$ as follows:
\[\beta_{2} (u)=\left(x(u),c_1, 0, u\right).\]
If $\lambda=1$, $c_3 =\frac{1}{2}$ $c_4=0$, then spacelike helicoidal surface of type II and spacelike rotational surface of type II is given as follows, respectively.
\begin{equation*}
X_{2}(u,v)=\begin{bmatrix}
\sqrt{2}\left(\arcsin{\left(\sqrt{\frac{u^2+1}{2}}\right)}-\arctan{\left(\sqrt{\frac{u^2+1}{1-u^2}}\right)}\right)+v\\ c_1\\u\sinh{v}\\u\cosh{v}
\end{bmatrix}   
\end{equation*}
and
\begin{equation*}
R_{2}(u,v)=\begin{bmatrix}
\sqrt{2}\arcsin{\left(\sqrt\frac{u^2+1}{2}\right)}\\
c_2\\
\sqrt{u^2+1}\sinh v\\ \sqrt{u^2+1}\cosh v
\end{bmatrix}.    
\end{equation*}
For $0\leq u\leq\pi$ and $0\leq v\leq\frac{\pi}{4}$,
the graphs of spacelike helicoidal surface $X_{2}$ and spacelike rotational surface $R_{2}$ in $\mathbb{E}^3_1$ can be plotted by using Mathematica 10.4 shown in Figure \ref{fig2}.
\begin{figure}[htbp]
\centering
\includegraphics[height=60mm]{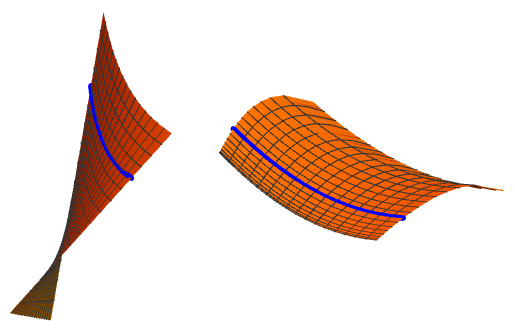}
\caption {\textbf{Left:} Spacelike helicoidal surface of type II; helix, \textbf{Right:} Spacelike rotational surface of type II; ellipse.}
\label{fig2}
\end{figure}
\end{example}

\subsection{Helicoidal Surface of Type III}
Assume that $X_3$ is a spacelike helicoidal surface of type III in $\mathbb{E}^4_1$ given by \eqref{eq11}. 
From a simple calculation, we have 
\begin{equation}
\label{type3eq2}
g_{11}= x'^2(u)-2w'(u)z'(u),\;\;g_{12}=g_{21}=-\lambda w'(u),\;\;
g_{22}=2w^2(u) 
\end{equation}
with ${W}=2w^2(u)(x'^2(u)-2w'(u)z'(u))-\lambda^2w'^2(u)>0$
for all $u\in I\subset\mathbb{R}$.
Then, we choose an orthonormal frame field 
$\{e_{1},e_{2}, N_{1}, N_{2}\}$ on $X_3$ such that $e_1, e_2$ are tangent to $X_3$ and $N_1, N_2$ are normal to $X_3$:
\begin{align}
\label{type3eq3}
\begin{split}
e_{1}&=\frac{1}{\sqrt{g_{11}}}X_{3u},\;\;\;\;
e_{2}=\frac{1}{\sqrt{Wg_{11}}}(g_{11}X_{3v}-{g_{12}X_{3u}}), \\
N_{1}&=\eta_{1}+\frac{x'}{w'}\xi_{3},\\
N_{2}=&\frac{1}{w'\sqrt{W}}\bigg(\sqrt{2}x'ww'\eta_{1}+(\lambda w'^2+2vww'^2)\eta_{2}+\sqrt{2}(\lambda vw'^2+v^2ww'^2+wz'^2-ww'z'^2)\xi_{3}\\
&+\sqrt{2}ww'^2\xi_{4}\bigg)
\end{split}
\end{align}
with $\langle e_{1},e_{1}\rangle=\langle e_{2},e_{2}\rangle=\langle N_{1},N_{1}\rangle=-\langle N_{2},N_{2}\rangle=1$. 
By direct computations, we get the coefficients of the second fundamental form as follows
\begin{align}
\label{type3eq4}
    \begin{split}
     &b_{11}^1=\frac{x'' w'- x' w''}{w'},\;\;\; 
     b_{12}^1=b_{21}^1=b_{22}^1=0,\\
     &b_{11}^2
     =\frac{\sqrt{2}(x'x''w'-x'^2w''+w'(z'w''-w'z''))}{w'\sqrt{W}},\;\; 
     b_{12}^2=b_{21}^2=\frac{\sqrt{2}\lambda w'^2}{\sqrt{W}},\;\;
     b_{22}^2=\frac{-2\sqrt{2}w^2w'}{\sqrt{W}}.
    \end{split}
\end{align}
Thus, we find the components of the mean curvature $H_{i}^{X_{3}},\;(i=1,2)$ 
and from the equation \eqref{eq6}, we find the components of the Gauss curvature $K^{X_{3}}$ of $X_3$ in $\mathbb{E}^4_1$, respectively, as follows
\begin{align}
\label{type3eq5} 
\begin{split}
&H_{1}^{X_{3}}=\frac{w^2(x''w'-x'w'')}
{2w'\sqrt{W}},\\
&H_{2}^{X_{3}}=\frac{\sqrt{2}(-\lambda^2w'^4
-2w^2w'^3z'+w^3x'^2w''-w^3w'(z'w''+x'x'')
+w^2w'^2(x'^2+wz''))}
{w'\sqrt{W}}
\end{split}
\end{align}
and 
\begin{equation}
\label{type1eq6}
K^{X_{3}}=\frac{2\lambda^2w'^4-4w^3(x'^2w''+w'(z'w''+x'x''-w'z''))}{W^2}.
\end{equation}
The following theorem is a generalization of the classical Bour's theorem  for a spacelike helicoidal surfaces of type III in  $\mathbb{E}^4_{1}$.
\begin{theorem}
\label{type3thm1}
A spacelike helicoidal surface of type III in $\mathbb{E}^4_{1}$ given by \eqref{eq11} is isometric to a spacelike rotational surface
\begin{align}
\label{type3izR}
\begin{split}
R_{3}(u,v)&=
\int{a(u)w'(u)du}\eta_{1}+
\sqrt{2}w(u)\left(v-\frac{\lambda}{2w(u)}\right)\eta_{2}\\
&
+\left(\int{b(u)w'(u)du}+w(u)\left(v-\frac{\lambda}{2w(u)}\right)^2\right)\xi_{3}+w(u)\xi_{4}    
\end{split}
\end{align}
so that helices on the spacelike helicoidal surface of type III correspond to straight lines on the spacelike rotational surface, where $a(u)$ and $b(u)$ are differentiable functions satisfying the following equation:
\begin{equation}
 \label{type3eq7}
    a^2(u)-2b(u)=\frac{x'^2(u)-2w'(u)z'(u)}
    {w'^2(u)}-\frac{\lambda^2}{2w^2(u)}
 \end{equation}
with $w'(u)\not=0$ for all $u\in I\subset\mathbb{R}$.
\end{theorem}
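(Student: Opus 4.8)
The plan is to reproduce, in the degenerate (parabolic) setting, the scheme already used for Theorems \ref{type1thm1} and \ref{type2thm1}. First I would record the induced metric of $X_3$ from \eqref{type3eq2},
\[
ds^2_{X_3}=(x'^2-2w'z')\,du^2-2\lambda w'\,du\,dv+2w^2\,dv^2,
\]
and look for the orthogonal trajectories of the helices $u=\text{const}$. Imposing $\langle dX_3,X_{3v}\rangle=0$, i.e. $g_{12}\,du+g_{22}\,dv=0$, gives the first order equation $-\lambda w'\,du+2w^2\,dv=0$; since $w\neq0$ this integrates to show that $v+\tfrac{\lambda}{2w(u)}$ is constant along such a trajectory. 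Taking this quantity as a new coordinate $\bar v$, so that $dv=d\bar v+\tfrac{\lambda w'(u)}{2w^2(u)}\,du$, and substituting, the mixed term cancels and the metric becomes diagonal:
\[
ds^2_{X_3}=\Big(x'^2-2w'z'-\tfrac{\lambda^2 w'^2}{2w^2}\Big)\,du^2+2w^2\,d\bar v^2.
\]

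Next I would write the parabolic rotational surface $R_3$ --- the $\lambda=0$ case of \eqref{eq11}, as recalled in the Remark and in \cite{Dursun,Bektas} --- in the general form
\[
R_3(k,t)=n(k)\,\eta_1+\sqrt{2}\,t\,p(k)\,\eta_2+\big(s(k)+t^2p(k)\big)\,\xi_3+p(k)\,\xi_4,
\]
and compute its first fundamental form, paying attention to the pseudo--orthonormal basis relations $\langle\xi_3,\xi_3\rangle=\langle\xi_4,\xi_4\rangle=0$, $\langle\xi_3,\xi_4\rangle=-1$. A short computation then gives $\langle R_{3k},R_{3t}\rangle=0$, hence
\[
ds^2_{R_3}=(n'^2-2s'p')\,dk^2+2p^2\,dt^2.
\]
Comparing with the diagonalised metric of $X_3$, an isometry is obtained by setting $k=u$, $t=\bar v$, $p(k)=w(u)$ and $n'^2-2s'p'=x'^2-2w'z'-\tfrac{\lambda^2 w'^2}{2w^2}$. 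Note that $R_3$ is automatically spacelike, because its $dk^2$--coefficient equals $W/(2w^2)>0$ by the hypothesis $W>0$, while the $dt^2$--coefficient $2p^2$ is positive.

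To reach the stated parametrisation I would put $a(u)=n'(u)/w'(u)$ and $b(u)=s'(u)/w'(u)$ --- admissible since $w'\neq0$, and $p'=w'$. Then $n'^2-2s'p'=w'^2(a^2-2b)$, so the matching identity is exactly \eqref{type3eq7}; integrating gives $n(u)=\int a(u)w'(u)\,du$ and $s(u)=\int b(u)w'(u)\,du$, and inserting $p=w$ and $t=\bar v$ into the general form of $R_3$ produces \eqref{type3izR}. Finally, restricting the isometry to $u=u_0$ sends the helix $X_3(u_0,v)$ to the curve $v\mapsto R_3(u_0,v)$ on the rotational surface, which establishes the correspondence asserted in the statement.

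The step I expect to be the main obstacle is the bookkeeping in the two metric computations relative to the degenerate frame $\{\eta_1,\eta_2,\xi_3,\xi_4\}$: one must verify that the $du\,dv$ cross term really cancels after the shift $v\mapsto\bar v$, and that the $dk^2$--coefficient of $R_3$ collapses to the clean expression $w'^2(a^2-2b)$ once the null contributions of $\xi_3$ and $\xi_4$ are accounted for. Everything else is the same routine as in the proofs of Theorems \ref{type1thm1} and \ref{type2thm1}; one only needs to keep the standing assumptions $w(u)\neq0$ and $w'(u)\neq0$ in view, since they are precisely what make the coordinate $\bar v$ and the functions $a,b$ well defined.
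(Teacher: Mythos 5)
Your proposal is correct and follows essentially the same route as the paper: diagonalize the metric of $X_3$ via the orthogonal trajectories of the helices, write the parabolic rotational surface in the general form $n\eta_1+\sqrt{2}tp\,\eta_2+(s+t^2p)\xi_3+p\,\xi_4$, match the two first fundamental forms with $p=w$, and introduce $a=n'/w'$, $b=s'/w'$ to obtain \eqref{type3eq7} and \eqref{type3izR}; your extra check that the $du^2$--coefficient equals $W/(2w^2)>0$ is a small useful addition the paper leaves implicit.
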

\begin{proof}
Let $X_3$ be a spacelike helicoidal surface defined by \eqref{eq11} and $R_3$ be a spacelike rotational surface corresponding to $X_3$ in $\mathbb{E}^4_1$. 
From the equation \eqref{type3eq2}, we have the induced metric of $X_3$
as follows:
\begin{equation}
\label{type3mtrc1}
 ds^2 _{X_3}=(x'^2(u)-2w'(u)z'(u))du^2
 -2\lambda w'(u)dudv + 2w^2(u)dv^2.
\end{equation}
Then, we find a curve on $X_3$ which is orthogonal to a helix on $X_3$, that is $v$--parameter curve of $X_3$. 
From orthogonality condition, we get 
\begin{equation}
\label{type3eq8}
    -\lambda w'(u)du+2w^2(u)dv=0.
\end{equation}
By solving the equation \eqref{type3eq8}, we find 
\begin{equation}
    v=-\frac{\lambda}{2w(u)}+c,
\end{equation}
where c is an arbitrary constant.
If we take $\bar{v}=c$, then we have
\begin{equation}
\label{type3eq8-a}
v=\overline{v}-\frac{\lambda}{2w(u)}.
\end{equation}
From equation \eqref{type3eq8-a}, we can easily obtain the following equation:
\begin{equation}
\label{type3eq9}
    dv=d\overline{v}+\frac{\lambda w'(u)}{2w^2(u)}du.
\end{equation}
Substituting the equation \eqref{type3eq9} in \eqref{type3mtrc1}, 
the induced metric of $X_3$ becomes
\begin{equation}
\label{type3eq10}
 ds^2 _{X_3}=\left(x'^2(u)-2w'(u)z'(u)-\frac{\lambda^2 w'^2(u)}{2w^2(u)}\right)du^2 + (2w^2(u))d\overline{v}^2.
\end{equation}
On the other hand, 
the spacelike rotational surface $R_3$ in $\mathbb{E}^4_{1}$
related to $X_3$ is given by 
\begin{equation}
\label{type3rtk}
R_{3}(k,t)=n(k)\eta_{1}+\sqrt{2}tr(k)\eta_{2}+(s(k)+t^2r(k))\xi_{3}+r(k)\xi_{4}.
\end{equation}
We know that the induced metric of $R_3$ is given by
\begin{equation}
\label{type3eq11}
 ds^2 _{R_3}=(n'^2(k)-2r'(k)s'(k))dk^2 + 2r^2(k)dt^2.
\end{equation}
with $n'^2(k)-2r'(k)s'(k)>0$. 
Comparing the equations \eqref{type3eq10} and \eqref{type3eq11}, 
we get an isometry between $X_3$ and $R_3$ by taking
$u=k, \bar{v}=t$, $r(k)=w(u)$ 
and 
\begin{equation}
\label{type3eq12}
 n'^2(k)-2r'(k)s'(k)= \left(x'^2(u)-2w'(u)z'(u)-\frac{\lambda^2 w'^2(u)}{2w^2(u)}\right). 
\end{equation}
Say $a(u)=\frac{n'(k)}{r'(k)}$ and 
$b(u)=\frac{s'(k)}{r'(k)}$. Then,
the equation \eqref{type3eq12} gives \eqref{type3eq7}. 
Moreover, we find 
\begin{equation}
  s(u)=\int{b(u)w'(u)du}\;\;\mbox{and}\;\; n(u)=\int{b(u)w'(u)du}.
\end{equation}
Thus, we get the spacelike rotational surface $R_3$ given by \eqref{type3izR} which is isometric to $X_3$  given by \eqref{eq11}. 
If we choose a helix as $X_3(u_0,v)$, 
where $u_{0}$ is an arbitrary constant, 
it corresponds to the parallel straight line
\begin{align}
\label{type3izR0}
\begin{split}
R_{3}(u_0,v)&=
\sqrt{2}w(u_0)\left(v-\frac{\lambda}{2w(u_0)}\right)\eta_{2}
+w(u_0)\left(v-\frac{\lambda}{2w(u_0)}\right)^2\xi_{3}+w(u_0)\xi_{4}    
\end{split}
\end{align} (see \cite{Grbovic} for details). 
\end{proof} 

\begin{theorem}
The Gauss maps of a spacelike helicoidal surface of type III in $\mathbb{E}^4_1$ given by \eqref{eq11} and a spacelike rotational surface in $\mathbb{E}^4_1$
given by \eqref{type3izR} are definitely different.
\end{theorem}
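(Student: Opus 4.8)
The plan is to proceed exactly as in the proofs of Theorems~\ref{type1thm2} and~\ref{type2thm2}: compute the Gauss maps of the helicoidal surface $X_3$ and of the isometric rotational surface $R_3$ from the wedge--product formula~\eqref{eq8}, assume the two coincide, and extract a contradiction by comparing their coefficients in the basis $\{\eta_1\wedge\eta_2,\ \eta_1\wedge\xi_3,\ \eta_1\wedge\xi_4,\ \eta_2\wedge\xi_3,\ \eta_2\wedge\xi_4,\ \xi_3\wedge\xi_4\}$ of $\bigwedge^2\mathbb{E}^4_1$ attached to the pseudo--orthonormal basis $\{\eta_1,\eta_2,\xi_3,\xi_4\}$.

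First I would differentiate~\eqref{eq11}, obtaining
\[
X_{3u}=x'\eta_1+\sqrt2\,vw'\eta_2+(z'+v^2w')\xi_3+w'\xi_4,\qquad
X_{3v}=\sqrt2\,w\,\eta_2+(2vw+\lambda)\xi_3 .
\]
In $X_{3u}\wedge X_{3v}$ the $\xi_3\wedge\xi_4$--term can only arise from the $\xi_4$--part of $X_{3u}$ wedged with the $\xi_3$--part of $X_{3v}$, so by~\eqref{eq8} the $\xi_3\wedge\xi_4$--component of $\nu_{X_3}$ equals $-\dfrac{w'(u)\bigl(2vw(u)+\lambda\bigr)}{\sqrt W}$. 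I would then run the same computation for~\eqref{type3izR}; differentiating it --- taking care that the shifted parameter $v-\tfrac{\lambda}{2w(u)}$ depends on $u$ --- yields $R_{3v}=\sqrt2\,w\,\eta_2+(2vw-\lambda)\xi_3$, while the $\xi_4$--component of $R_3$ is simply $w(u)$, so the $\xi_3\wedge\xi_4$--component of $\nu_{R_3}$ equals $-\dfrac{w'(u)\bigl(2vw(u)-\lambda\bigr)}{\sqrt W}$. Here I invoke Theorem~\ref{type3thm1}: since $X_3$ and $R_3$ are isometric, the determinant $W$ of their first fundamental forms is the same, so both Gauss maps carry the identical normalizing factor $1/\sqrt W$, which cancels in the comparison; note, moreover, that this component involves neither of the functions $a,b$ of~\eqref{type3eq7}.

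If $\nu_{X_3}=\nu_{R_3}$, equating the two $\xi_3\wedge\xi_4$--components forces $w'(u)\bigl(2vw(u)+\lambda\bigr)=w'(u)\bigl(2vw(u)-\lambda\bigr)$ for all $v$, that is, $2\lambda\,w'(u)=0$ for every $u\in I$. Since $\lambda\in\mathbb{R}^{+}$ and $w'(u)\neq0$ --- the latter being in force throughout the type III setting, e.g.\ in the normal field $N_1=\eta_1+\tfrac{x'}{w'}\xi_3$ of~\eqref{type3eq3} and in the hypotheses of Theorem~\ref{type3thm1} --- this is impossible. Hence $\nu_{X_3}$ and $\nu_{R_3}$ can never agree, which is precisely the assertion.

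I do not expect a genuine obstacle here: the argument finishes with a single coefficient comparison and is independent of the choice of $a$ and $b$. The only point requiring care is computational bookkeeping --- handling the null pairings $\langle\xi_3,\xi_3\rangle=\langle\xi_4,\xi_4\rangle=0$, $\langle\xi_3,\xi_4\rangle=-1$ correctly and, above all, differentiating~\eqref{type3izR} in $v$ so that the $\xi_3$--slot of $R_{3v}$ genuinely comes out with a $-\lambda$, in contrast to the $+\lambda$ carried by $X_{3v}$; it is exactly this sign discrepancy that produces the contradiction.
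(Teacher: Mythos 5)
Your proposal is correct and follows essentially the same route as the paper: compute $\nu_{X_3}$ and $\nu_{R_3}$ via \eqref{eq8} from the parametrizations \eqref{eq11} and \eqref{type3izR}, and compare coefficients, where the $\xi_3\wedge\xi_4$--terms $-w'(2vw+\lambda)/\sqrt{W}$ and $-w'(2vw-\lambda)/\sqrt{W}$ force $2\lambda w'=0$, contradicting $\lambda\in\mathbb{R}^{+}$ and $w'\neq 0$ (the paper phrases this as the maps coinciding only when $\lambda=0$ or $w'(u)=0$). Your isolation of that single coefficient, which is independent of $a$ and $b$, is just a streamlined version of the paper's full component-by-component comparison.
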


\begin{proof}
Let $\{\eta_{1},\eta_{2},\xi_{3},\xi_{4}\}$ be the pseudo orthonormal bases of $\mathbb{E}_1^4$.
Then, using equation \eqref{eq8} 
we get the Gauss maps of
$X_3$ and $R_3$, respectively, as follows
\begin{align}
\label{type3GH}
\begin{split}
\nu_{X_{3}}=&\frac{1}{\sqrt{W}}(\sqrt{2}x'w\eta_1\wedge\eta_2
+x'(\lambda+2vw(u))\eta_1\wedge\xi_3\\
&+ \sqrt{2}(v^2ww'-wz'+\lambda vw')\eta_2\wedge\xi_3
-\sqrt{2}ww'\eta_2\wedge\eta_4-w'(\lambda+2vw)\xi_3\wedge\xi_4)
\end{split}
\end{align}
and
\begin{align}
\label{type3GR}
\begin{split}
\nu_{R_{3}}=&\frac{1}{\sqrt{W}}
(\sqrt{2}aww'\eta_1\wedge\eta_2
+2aww'\left(v-\frac{\lambda}{2w}\right)\eta_1\wedge\xi_3\\
&+\sqrt{2}ww'\left(\left(v-\frac{\lambda}{2w}\right)^2-b\right)\eta_2\wedge\xi_3
-\sqrt{2}ww'\eta_2\wedge\xi_4
-ww'\left(2v-\frac{\lambda}{w}\right)\xi_3\wedge\xi_4).
\end{split}
\end{align}
Comparing \eqref{type3GH} and \eqref{type3GR}, it can be easily seen that they are identical when $\lambda=0$ or $w'(u)=0$. 
That is a contradiction. Thus, their Gauss maps are certainly different.
\end{proof}

We now give an example by using Theorem \ref{type3thm1}. 

\begin{example}
We consider the spacelike profile curve $\beta_{3}$ as follows:
\[\beta_{3} (u)=u\eta_{1}+c\xi_{3}+u\xi_{4}.\]
Then, spacelike helicoidal surface of type III and spacelike rotational surface of type III is given as follows, respectively
\begin{equation}
\label{type3aGH}
   X_{3}(u,v) =u\eta_{1}+\sqrt{2}uv\eta_{2}+\left(c+uv^2+\lambda v\right)\xi_{3}+u\xi_{4}
\end{equation}
and
\begin{equation}
    R_{3}(u,v)=u\eta_{1}+\sqrt{2}uv\eta_{2}+\left(c+uv^2\right)\xi_{3}+u\xi_{4}.
\end{equation}
If we take $c=0$ and $\lambda=1$, then the graphs of a spacelike helicoidal surface $X_{3}$ and a spacelike rotational surface $R_{3}$ in $\mathbb{E}^3_1$ can be plotted by using Mathematica 10.4
with the following command:
\[ParametricPlot3D[\{x(u,v)+y(u,v),z(u,v),w(u,v)\},\{u,a,b\},\{v,c,d\}]\]
with $0\leq u\leq\pi$ and $-\pi\leq v\leq\pi$ shown in Figure \ref{fig3}.

\begin{figure}[htbp]
\centering
\includegraphics[height=60mm]{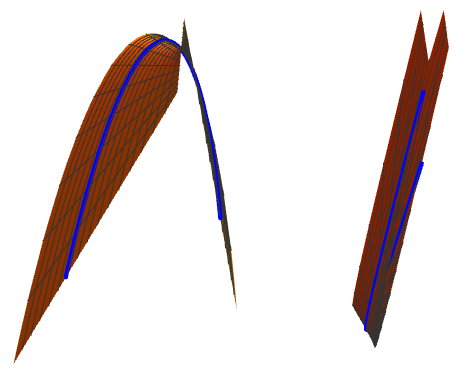}
\caption {\textbf{Left:} Spacelike helicoidal surface of type III ; helix, \textbf{Right:} Spacelike rotational surface of type III; straight line.}
\label{fig3}
\end{figure}
\end{example}

\newpage
\section{Conclusion}

In this paper, we investigate Bour's theorem for three types of spacelike helicoidal surfaces in Minkowski 4-space which are obtained by M. Babaarslan and N. S{\"o}nmez \cite{Babaarslan4}. Moreover, we give the parametrizations of the pair of isometric spacelike surfaces which have same Gauss map and investigate their geometric properties such as hyperplanar and minimal. Finally, we give some examples and plot the corresponding graphs by using Mathematica.

In the future, similarly we will study on Bour's theorem on timelike helicoidal surfaces in Minkowski 4--space. Furthermore, we will investigate the geometric properties of the pair of isometric surfaces according to Bour's theorem in Minkowski 4-space whose Gauss maps are same.

\section*{Acknowledgment}
This work is a part of the master thesis of the first author and it is supported by The Scientific and Technological Research Council of Turkey (TUBITAK) under Project 121F211.

\end{document}